\numberwithin{equation}{section}
\@ifdefinable\equationname{\let\equationname\equationautorefname}
\def\equationautorefname~#1\@empty\@empty\null{(#1\@empty\@empty\null)}
\@ifdefinable\AMSname{\let\AMSname\AMSautorefname}
\def\AMSautorefname~#1\@empty\@empty\null{(#1\@empty\@empty\null)}
\@ifdefinable\itemname{\let\itemname\itemautorefname}
\def\itemautorefname~#1\@empty\@empty\null{(#1\@empty\@empty\null)}
\renewcommand{\theenumi}{\arabic{enumi}}
\renewcommand{\theenumii}{\roman{enumii}}
\renewcommand{\p@enumii}{\theenumi$\m@th\vert$}
\renewcommand{\p@enumiii}{\theenumi.\theenumii.}
\renewcommand{\labelitemi}{$\m@th\bullet$}
\renewcommand{\labelitemii}{$\m@th\diamond$}
\renewcommand{\labelitemiii}{$\m@th\star$}
\renewcommand{\labelitemiv}{$\m@th\cdot$}
\newcommand{\basetheorem}[3]{%
    \newtheorem{#1}{#2}[#3]
    \newtheorem*{#1*}{#2}
    \expandafter\def\csname #1autorefname\endcsname{#2}
}%
\newcommand{\maketheorem}[3]{%
    \newaliascnt{#1}{#3}
    \newtheorem{#1}[#1]{#2}
    \aliascntresetthe{#1}
    \expandafter\def\csname #1autorefname\endcsname{#2}
    \newtheorem{#1*}{#2}
}%
\newtheorem{definition-proposition}[theorem]{Definition-Proposition}
\newtheorem*{mainthm*}{Main Theorem}
\newtheorem*{theoremA*}{Theorem A}
\newtheorem*{theoremB*}{Theorem B}
\newtheorem*{theoremC*}{Theorem C}
\newtheorem*{theoremD*}{Theorem D}
\newtheorem*{theoremE*}{Theorem E}
\newtheorem*{theoremF*}{Theorem F}
\newtheorem*{theoremG*}{Theorem G}
\newtheorem*{keylemma*}{Key Lemma}
\theoremstyle{definition}
\newcommand{\N}{\mathbb N}
\newcommand{\Z}{\mathbb Z}
\newcommand{\Q}{\mathbb Q}
\newcommand{\bP}{\mathbb P}
\newcommand{\aaa}{\mathfrak{a}}
\newcommand{\bbb}{\mathfrak{b}}
\newcommand{\mmm}{\mathfrak{m}}
\newcommand{\nnn}{\mathfrak{n}}
\newcommand{\p}{\mathfrak{p}}
\newcommand{\sA}{\mathcal{A}}
\newcommand{\sF}{\mathcal{F}}
\newcommand{\sL}{\mathcal{L}}
\newcommand{\sO}{\mathcal{O}}
\newcommand{\kay}{\mathcal{k}}
\newcommand{\el}{\mathcal{l}}
\DeclareMathOperator{\sHom}{\mathcal{H}\hspace{-.2ex}\mathcal{o}\hspace{-.1ex}\mathcal{m}}
\DeclareMathOperator{\Aut}{Aut}
\DeclareMathOperator{\Cl}{Cl}
\DeclareMathOperator{\coker}{{coker}}
\DeclareMathOperator{\disc}{disc}
\DeclareMathOperator{\divisor}{{div}}
\DeclareMathOperator{\id}{{id}}
\DeclareMathOperator{\Spec}{Spec}
\DeclareMathOperator{\Tr}{Tr}
\DeclareMathOperator{\chern}{ch}
\DeclareMathOperator{\td}{td}
\DeclareMathOperator{\Ob}{Ob}
\DeclareMathOperator{\Hom}{Hom}
\DeclareMathOperator{\Pic}{Pic}
\newcommand{\eg}{\emph{e.g.}~}
\newcommand{\ie}{\emph{i.e.}~}
\newcommand{\cf}{\emph{cf.}~}
\title{Finite torsors over strongly $F$-regular singularities}
\author{Javier A. Carvajal-Rojas}
\address{%
  \'Ecole Polytechnique F\'ed\'erale de Lausanne, SB MATH CAG, MA C3
  615 (B\^atiment MA), Station 8, CH-1015 Lausanne, Switzerland
  \newline
  Universidad de Costa Rica, Escuela de Matem\'atica, San Jos\'e
  11501, Costa Rica} 
\email{javier.carvajalrojas@epfl.ch}
\begin{document}



\maketitle

\begin{prelims}

\DisplayAbstractInEnglish

\bigskip

\DisplayKeyWords

\medskip

\DisplayMSCclass

\end{prelims}


\newpage
\setcounter{tocdepth}{1}
\tableofcontents


\section{Introduction}

Inspired by the relationship between strongly $F$-regular
singularities in characteristic $p>0$ and KLT singularities in
characteristic zero, K.~Schwede, K.~Tucker, and the author obtained in
\cite[Theorem~A]{CarvajalSchwedeTuckerEtaleFundFsignature} an analog
of \cite[Theorem~1]{XuFinitenessOfFundGroups}. Concretely, they proved
that the order of the \'etale fundamental group of a big open of the
spectrum of a strongly $F$-regular germ $(R, \mmm, \kay, K)$ is at
most $1/s(R)$ and prime-to-$p$ (where $s(R)$ is the $F$-signature of
$R$). It is worth noting that B.~Bhatt, O.~Gabber, and M.~Olsson
introduced a spreading out technique to give a proof of Xu's result
from ours \cite{BhattGabberOlssonSpreadOut}. See
\autoref{sec.FsignAndSFR} for more on strong $F$-regularity and
$F$-signatures.

The aforementioned result gives a satisfactory answer to what extent
there are nontrivial finite \'etale torsors over big opens $U \subset
X \coloneqq \Spec R$ (with $R$ as above). We use the word torsor as in
\cite[III, \S4]{MilneEtaleCohomology} (\textit{i.e.} principal homogeneous
spaces in the fppf topology). Let us refer to a finite torsor over a
big open as a quasitorsor.  Indeed, \cite[Theorem
A]{CarvajalSchwedeTuckerEtaleFundFsignature} establishes the existence
of a generically Galois finite extension $(R,\mmm, \kay,K)\subset
(R^{\mathrm{\acute{e}t}}, \mmm^{\mathrm{\acute{e}t}},\kay,
K^{\mathrm{\acute{e}t}})$ that restricts to a
$\textnormal{Gal}(K^{\mathrm{\acute{e}t}}/K)$-torsor over
$X_{\mathrm{reg}}$ and: 
\begin{itemize}
\item $(R^{\mathrm{\acute{e}t}}, \mmm^{\mathrm{\acute{e}t}},
  \kay,K^{\mathrm{\acute{e}t}})$ is a strongly $F$-regular germ, 
\item every \'etale quasitorsor over $X^{\mathrm{\acute{e}t}}
  \coloneqq \Spec R^{\mathrm{\acute{e}t}}$ extends to a torsor over
  $X^{\mathrm{\Acute{e}t}}$, and 
\item 
its generic degree (\ie the order of
$\textnormal{Gal}(K^{\mathrm{\acute{e}t}}/K)$) is at most $1/s(R)$ and
prime-to-$p$. 
\end{itemize}

Cartier's theorem establishes that, in characteristic zero, all affine (resp. finite) algebraic groups are smooth (resp. \'etale); see \cite[3.23, 3.24]{MilneAlgebraicGroups}, \cite[II, \S6.1]{DemazureGrabielGroupes}. This result fails in positive characteristics, thereby showing that the local \'etale fundamental group is not suited to classify quasitorsors over a singularity (as it does in characteristic zero). This had been observed in the failure in positive characteristics of the celebrated Mumford--Flenner theorem characterizing the regularity of normal surface singularities by the triviality of their local fundamental group \cite{MumfordFundGroup,FlennerReineLokaleRinge}. Remarkably,  H.~Esnault and E.~Viehweg gave an explanation for this failure via their local Nori fundamental group-scheme. Their central idea was incorporating general quasitorsors into the picture; see \cite{EsnaultViehwegSurfaceSingularitiesDominatedSmoothVarieties}.  This raises the question of whether there is a \emph{maximal cover} as above for general finite torsors.

\begin{question} \label{que:MainQiestion}
Further assume $\kay \subset R$. Does there exist a \emph{nice} finite cover $R \subset R^{\star}$ by a strongly $F$-regular germ so that the restriction map of (isomorphism classes of) torsors
\[
\varrho^1_{X^{\star},U^{\star}}(G) \mathrel{:} \check{H}^1(X_{\mathrm{fl}}^{\star},G) \to \check{H}^1(U_{\mathrm{fl}}^{\star}, G)
\]
is surjective for all big opens $U^{\star} \subset X^{\star}=\Spec R^{\star}$ and all finite algebraic groups $G/\kay$?
\end{question}
By nice in \autoref{que:MainQiestion}, we seek for a cover described in terms of quasitorsors.  There is a purity theorem for finite torsors \cite[II, Proposition 7]{NoriFundamentalGroupScheme}, \cite{MoretBaillyPurete}; see \cite{MarramaPurityForTorsors} for an account.  That is,  \autoref{que:MainQiestion} has an affirmative answer if $R$ is regular by taking $R^{\star} = R$.  Thus,  the abundance of quasitorsors over a (normal) germ that are not restrictions of a torsor over the whole spectrum is a measurement of the severity of the singularity.  See \cite{EsnaultViehwegSurfaceSingularitiesDominatedSmoothVarieties}. 

Note that $\check{H}^1(X_{\mathrm{fl}},G) \neq 0$ unless $G/\kay$ is \'etale. See \cite[\S2]{EsnaultViehwegSurfaceSingularitiesDominatedSmoothVarieties} for a glimpse into the difficulties this introduces.  We may then rephrase \cite[Theorem A]{CarvajalSchwedeTuckerEtaleFundFsignature} by saying that \autoref{que:MainQiestion} has an affirmative answer for all \'etale algebraic groups $G/\kay$. In particular, we may replace $R$ by $R^{\mathrm{\acute{e}t}}$ and study \autoref{que:MainQiestion} for infinitesimal algebraic groups. In such a case, we answer \autoref{que:MainQiestion} affirmatively for all \emph{solvable} $G/\kay$ and $R \subset R^{\star}$ can be obtained as finite chain of purely inseparable cyclic covers. In the process, we will solve a fundamental problem raised by K.~Watanabe \cite{WatanabeFRegularFPure} on the perseverance of strong $F$-regularity under canonical covers and prove that the Picard group of a globally $F$-regular variety is torsion-free. To explain this in more detail, let us consider the following setup.

\begin{setup} \label{setup}
Let $(R, \mmm, \kay, K)$ be an $F$-finite strongly $F$-regular germ defined over an algebraically closed field $\kay$ of characteristic $p>0$. In particular, $R$ is an excellent henselian local normal domain with maximal ideal $\mmm$, residue field $\kay$, and field of fractions $K$.  We assume $d\coloneqq \dim R \geq 2$. Fix $Z = V(I) \subset \Spec R\eqqcolon X$ of codimension $\geq 2$ and complement $U\subset X$. Let $x$ denote the closed point of $X$.
\end{setup}

\begin{terminology} \label{term.QuasiTorsor}
Let $G/\kay$ be a finite algebraic group.  An extension of excellent $\kay$-algebras $A \subset B$ is a $G$-quasitorsor if $G$ acts on $B$ so that $A=B^G$ and the $G$-torsor locus of the quotient $q \mathrel{:}\Spec B \to \Spec A$ is a big open of $\Spec A$. By the purity theorem \cite{MoretBaillyPurete}, $G$-quasitorsors restrict to $G$-torsors over the regular locus if the base is further $\mathbf{R}_1$.
\end{terminology}

 Letting $G^{\circ} \subset G$ denote the neutral component, our main result is the following.

\begin{mainthm*}
[\autoref{thm.FundamentalGroupIsFinite}] Work in \autoref{setup}. There is a finite chain of finite local extensions of strongly $F$-regular $\kay$-germs
\[
(R, \mmm, \kay,K) \subsetneq (R_1, \mmm_1, \kay,K_1) \subsetneq \cdots \subsetneq (R_t, \mmm_t, \kay, K_t) = (R^{\star}, \mmm^{\star}, \kay, K^{\star}),
\]
where each $(R_i, \mmm_i, \kay,K_i) \subsetneq (R_{i+1}, \mmm_{i+1}, \kay, K_{i+1})$ is a $G_i$-quasitorsor and:
\begin{enumerate}
\item $G_i/\kay$ is a linearly reductive finite algebraic group, 
\item $[K^{\star}:K]$ is at most $1/s(R)$,
\item $\varrho^1_{X^{\star},U^{\star}}(G) \mathrel{:} \check{H}^1(X_{\mathrm{fl}}^{\star},G) \to \check{H}^1(U_{\mathrm{fl}}^{\star}, G)$ is surjective for all big opens $U^{\star} \subset X^{\star}=\Spec R^{\star}$ and all finite algebraic groups $G/\kay$ for which $G^{\circ}$ is solvable.
\end{enumerate}
\end{mainthm*}

In the rest of this introduction, we explain the main steps in achieving Theorem A, whereby outlining the paper and pinpointing some important results obtained in its afterglow. 

In studying \autoref{que:MainQiestion}, it is natural to stick as much as possible to the strategy that was so successful in \cite{CarvajalSchwedeTuckerEtaleFundFsignature} for the \'etale case. That strategy was based on bounding above the degree of a connected element of $\check{H}^1(U_{\mathrm{fl}},G)$ (for any \'etale $G/\kay$). This was done via a novel transformation rule for the $F$-signature under local \'etale quasitorsors; see \cite[Theorem B]{CarvajalSchwedeTuckerEtaleFundFsignature}. Thus,  one may hope to get an analogous transformation rule for general local quasitorsors.  Under closer inspection,  one sees that such transformation was built upon the \emph{trace map} of such local \'etale quasitorsors and exploiting its most basic properties. Thus, we are left facing two questions. Do such trace maps exist for general quasitorsors? If so, do they have the same or analogous properties to those of the trace map in the \'etale case? We dedicate \autoref{Trace} entirely to these questions.  Fortunately,  the former question will turn out to have a quite satisfactory answer whereas the latter will end up making things very interesting.  In \autoref{Trace},  we adapt to our purposes the theory of \emph{integrals and traces} found in the Hopf algebras literature.  We further prove the properties we need from these traces but could not find a reference for.  The following theorem summarizes the section.  We denote by $o(G)$ the order of a finite algebraic group $G/\kay$.

\begin{theoremA*}[{\autoref{ConstructionTrace},
    \autoref{thm.Torsor_and_trace}, \autoref{cor.Tr_generates}}]
Let $q\mathrel{:}Y \to X$ be a quotient by a finite algebraic group
$G/\kay$; see \autoref{quotients}. There exists $\Tr_{Y/X} \in
\Hom_X(q_*\sO_Y,\sO_X)$ such that: $q$ is $G$-torsor at $x \in X$ if
and only if $(q_* \sO_Y)_x$ is a free $\sO_{X,x}$-module of rank
$o(G)$ and $\Tr_{Y/X} \otimes \sO_{X,x}$ freely generates
$\Hom_{\sO_{X,x}}((q_*\sO_{Y})_x, \sO_{X,x})$ as a
$(q_*\sO_{Y})_x$-module. In particular, assuming that $X$, $Y$ are
$\mathbf{S}_2$ schemes and $q$ is a $G$-quasitorsor,  the
$\sO_Y$-module $q^!\sO_X = \sHom_{X}(q_* \sO_Y, \sO_X)$ is freely
generated by its global section $\Tr_{Y/X}$.  
\end{theoremA*}

Using this, we are able to obtain the following transformation rule.

\begin{theoremB*}
[{\autoref{cor.Tr_generates}, \autoref{thm.TransformationRule}}]
Let $(R, \mmm, \kay, K) \subset (S, \nnn, \el, L)$ be a local
$G$-quasitorsor between $\mathbf{S}_2$ domains. If $\Tr_{S/R}$ is
surjective and $\Tr_{S/R}(\nnn) \subset \mmm$,  then $\left[\el:
\kay\right]  \cdot s(S)= o(G) \cdot s(R)$. 
\end{theoremB*}

This formula can be used along the same lines of \cite{CarvajalSchwedeTuckerEtaleFundFsignature} to investigate \autoref{que:MainQiestion} modulo these two hypotheses on $\Tr_{S/R}$.  Just as in \cite[\S 2.3]{CarvajalSchwedeTuckerEtaleFundFsignature}, the surjectivity of the trace (\textit{i.e.} cohomological tameness \cite{KerzSchmidtOnDifferentNotionsOfTameness,ChinburgErezPappasTaylorTameActions}) can be granted by $R$ being a splinter using Theorem A and so this is not an issue for us either. However, the condition $\Tr_{S/R}(\nnn) \subset \mmm$ is rather troublesome; see \autoref{ex.ExampleBadTrace}.  Recall such condition is automatic in the \'etale case; see \cite[Lemma 2.10]{CarvajalSchwedeTuckerEtaleFundFsignature}. This work is largely concerned with understanding such property in the infinitesimal case.  Such analysis occupies most of \autoref{ExistenceofMaximalCover}. To see how we do it, consider the commutative infinitesimal case. There are two diametrically opposite classes of such groups: the linearly reductive groups (\textit{e.g.} $\mu_p$) and the unipotent ones (\textit{e.g.} $\alpha_p$). We can make $\Tr_{S/R}(\nnn) \subset \mmm$ work in the most critical class of linearly reductive quasitorsors. Namely, Veronese-type cyclic covers (yet those of Kummer-type may still be problematic); see \autoref{ter.VeroneseKummerTypes}. This suffices to solve a longstanding question raised by K.~Watanabe \cite{WatanabeFRegularFPure} on whether canonical covers of strongly $F$-regular singularities are strongly $F$-regular:

\begin{theoremC*}
[\autoref{pro.GoodTrace}, \autoref{ex.CanonicalCovers}]
Let $(R,\mmm,\kay,K)$ be a strongly $F$-regular (resp. $F$-pure) local
$\mathbb{F}_p$-domain.  Let $D$ be a $\mathbb{Q}$-Cartier divisor on
$\Spec R$ of index $n \in \mathbb{N}$ and write $\divisor a + nD =0$
for some $a \in K^{\times}$. Then, the corresponding cyclic cover
$R\subset C=\bigoplus_{i=0}^{n-1}R(iD)$ is strongly $F$-regular
(resp. $F$-pure) and $s(C)=n\cdot s(R)$. In particular, canonical
covers (also known as index-$1$ covers) of $\Q$-Gorenstein strongly
$F$-regular (resp. $F$-pure) singularities are strongly $F$-regular
(resp. $F$-pure) even if $p$ divides the Gorenstein index. 
\end{theoremC*}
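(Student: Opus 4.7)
The plan is to realize $R \subset C$ as the quotient by a natural $\mu_n$-action on $C$ and invoke \autoref{thm.TransformationRule}. The $\bZ/n$-grading $C = \bigoplus_{i=0}^{n-1} R(iD)$ is precisely the $\mu_n$-coaction for which the ring of invariants is $R$. Since $D$ has exact index $n$, the classical cyclic-cover construction ensures that $C$ is a normal $F$-finite domain, finite over $R$. The hypothesis $\dim R \geq 2$ forces the non-Cartier locus of $D$ to have codimension at least $2$ in $\Spec R$; over the Cartier locus each summand $R(iD)$ is invertible, so $q\colon \Spec C \to \Spec R$ restricts to a genuine $\mu_n$-torsor there. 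Consequently $q$ is a $\mu_n$-torsor in codimension one, and \autoref{thm.Torsor_and_trace} produces a trace map $\Tr_{C/R}\colon C \to R$.

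Next I verify the two hypotheses of \autoref{thm.TransformationRule}. I expect the Hopf-algebra trace associated to the $\mu_n$-coaction to coincide with the projection onto the degree-$0$ summand $c_0 + \cdots + c_{n-1} \mapsto c_0$; this is a direct computation with the integral element of $\kay[\mu_n] = \kay[t]/(t^n-1)$ (for $p \nmid n$ one simply averages over $n$-th roots of unity). Granting this, $\Tr_{C/R}(1) = 1$ yields surjectivity, and $\Tr_{C/R}(\fran) \subseteq \fram$ is immediate, where $\fran = \fram \oplus \bigoplus_{i=1}^{n-1} R(iD)$ is the unique $\bZ/n$-graded maximal ideal of $C$. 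Moreover $\kappa(\fran) = C/\fran = R/\fram = \kappa(\fram) = \kay$, so the residue-field extension is trivial. After passing to completions at $\fran$ and $\fram$ (which preserves $F$-signature and all other relevant invariants for $F$-finite local rings), \autoref{thm.TransformationRule} applies and yields
\[
s(C) \;=\; \frac{o(\mu_n)}{[\kappa(\fran):\kappa(\fram)]} \cdot s(R) \;=\; n \cdot s(R).
\]
Since $R$ is strongly $F$-regular, $s(R) > 0$, hence $s(C) > 0$, and therefore $C$ is strongly $F$-regular. For the $F$-pure assertion, the $R$-module splitting of $R \hookrightarrow C$ provided by $\Tr_{C/R}$ combines with a Frobenius splitting of $R$ and the $\mu_n$-equivariance of Frobenius to produce a Frobenius splitting of $C$. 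The canonical-cover statement is then the special case in which $D$ represents a torsion generator of the class group of $R$ arising from $K_R$, with index equal to the Gorenstein index.

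The main obstacle I anticipate is rigorously identifying the abstract trace from \autoref{ConstructionTrace} with the degree-zero projection, particularly when $p \mid n$: in that case $\mu_n$ has a non-trivial infinitesimal part, and one cannot invoke the classical averaging formula; instead one must work directly with the integral of the Hopf algebra $\kay[\mu_n]$ as developed in \autoref{Trace}, and trace its action through the graded decomposition of $C$. Once this identification---and with it the crucial condition $\Tr_{C/R}(\fran) \subseteq \fram$---is established, everything else is a formal consequence of \autoref{thm.TransformationRule}.
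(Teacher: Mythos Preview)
Your overall strategy matches the paper's: realize $R\subset C$ as a $\bm\mu_n$-quotient that is a torsor in codimension one, identify the trace, and feed everything into \autoref{thm.TransformationRule} (and \autoref{sch.Scholium2} for the $F$-pure statement). However, you have misidentified where the actual work lies.

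The computation of the trace is not the obstacle. For \emph{any} diagonalizable group $D(\Gamma)$, the integral of $\kay[\Gamma]$ is the projection onto the summand $\kay\cdot 1$ (see \autoref{ex.ExampleTraces}); this is a two-line calculation that does not distinguish between $p\mid n$ and $p\nmid n$. Feeding this through the coaction $c_i\mapsto c_i\otimes t^i$ immediately gives $\Tr_{C/R}=$ projection onto degree $0$, as the paper records just before \autoref{pro.GoodTrace}. No averaging over roots of unity is needed.

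The step you pass over is the one the paper actually proves: that $\fran_C=\fram\oplus\bigoplus_{i=1}^{n-1}R(iD)$ is an \emph{ideal} of $C$ (whence $C$ is local with this maximal ideal and $\Tr_{C/R}(\fran_C)\subset\fram$ is then tautological). This is not automatic and is precisely where the hypothesis ``$n$ is the index of $D$'' enters. The issue is the wrap-around product: if $f\in R(iD)$, $g\in R(jD)$ with $i,j>0$ and $i+j=n$, one must show $fg/a\in\fram$. The paper argues by contradiction: if $fg/a$ were a unit, then at every height-one prime a valuation count forces $\divisor(f)=-iD$ and $\divisor(g)=-jD$, so $iD$ would be Cartier with $0<i<n$, contradicting minimality of $n$. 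Your appeal to ``the classical cyclic-cover construction'' and ``the unique $\bZ/n$-graded maximal ideal'' hides exactly this argument. Note also that the paper \emph{deduces} that $C$ is a domain from \autoref{thm.TransformationRule} rather than assuming it, and no passage to completions is needed.
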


Despite the success we could have in the linearly reductive case, the
unipotent case stands firmly as a potential truly troublesome case. We
deal with it in a rather radical way: 

\begin{theoremD*}[\autoref{thm.UnipotentTorsors}] Work in \autoref{setup}. Then, $\varrho_{X,U}^1(G) \mathrel{:} \check{H}^1(X_{\mathrm{fl}},G) \to \check{H}^1(U_{\mathrm{fl}},G)$ is surjective for all unipotent (not necessarily commutative) finite algebraic groups $G/\kay$.
\end{theoremD*}

This result should be thought of as a vast generalization of \cite[Corollary 2.11]{CarvajalSchwedeTuckerEtaleFundFsignature}, for the $p$-groups are the \'etale unipotent algebraic groups. Thus, it should be thought of as a generalization of the tameness of $\pi_1^{\textnormal{\'et}}(U)$ in \cite[Theorem A]{CarvajalSchwedeTuckerEtaleFundFsignature}. Putting together our observations from the linearly reductive as well as the unipotent cases, we get the following result which is the essential step in obtaining our Main Theorem inductively.

\begin{theoremE*}[\autoref{thm.MainTheorem}] Work in \autoref{setup}. Suppose that $\varrho_{X,U}^1(G)$ is not surjective for some finite algebraic group $G/\kay$ such that $G^{\circ}$ is commutative. There exist a nontrivial linearly reductive finite algebraic group $G'/\kay$ and $(R,\mmm,\kay,K) \subset (S,\nnn,\kay, L)$ a $G'$-torsor over $U$ but not everywhere such that $S$ is a strongly $F$-regular $\kay$-germ with $s(S) = o(G')\cdot s(R)$. 
\end{theoremE*}

Another instance of \'etale quasitorsors missing part of the picture in characteristic $p>0$ is that we were able to bound only the prime-to-$p$ torsion of $\Cl R$ in \cite{CarvajalSchwedeTuckerEtaleFundFsignature}. Here, we complete the picture by bounding all torsion. By taking cones, we get that globally $F$-regular varieties have torsion-free divisor class group. We discuss these applications in \autoref{sec:applications}.  

\begin{theoremF*}
[\autoref{cor.TorsionPicard}, \autoref{cor.DivisorClassGroup}] Work in \autoref{setup}.
The torsion of $\Cl R$ is bounded by $1/s(R)$. In particular, $\Cl R
$ is torsion-free if $s(R)>1/2$. Moreover, if $Y$ is a globally
$F$-regular $\kay$-variety then $\Pic Y$ is torsion-free.
\end{theoremF*}

\subsection*{Acknowledgements}
The author would like to thank Marco~Antei, Fabio Bernasconi,
Bhargav~Bhatt, Srikanth~Iyengar, Christian~Liedtke, Zsolt~Patakfalvi,
Stefan~Patrikis, Anurag~Singh, Daniel~Smolkin, Axel~St\"abler, and
Kevin~Tucker for all the help and valuable discussions they provided
to him when working on this paper. I also would like to thank Manuel
Blickle, Holger Brenner, Patrick Graf, Mircea Musta\c{t}\u{a}, and
Thomas Polstra for valuable comments on previous drafts. I am also
thankful to Abhishek Shukla who pointed out a mistake on the first
version.  I am deeply grateful to my advisor Karl~Schwede for all his
support, guidance, and encouragement throughout this project.  Without
his helpful advice and insights this preprint would have not being
possible. I am particularly thankful to the anonymous referees for
their valuable feedback.

\section{Notations, conventions, and preliminaries}
\begin{convention}
All schemes are defined over an algebraically closed field $\kay$ of
characteristic $p>0$. We let $F \mathrel{:} X \to X$ denote the Frobenius endomorphism of
a scheme $X$. For instance, if $R$ is a ring then $F^e_* R$ denotes
the $R$-module obtained by restriction of scalars under $R \to R$, $r
\mapsto r^{p^e}$. Let us recall that a scheme $X$ is said to be $F$-finite if its Frobenius endomorphism is finite. We assume all our schemes to be $F$-finite and noetherian, and so
excellent. Fibered/tensor products are defined over $\kay$
unless otherwise explicitly stated.
\end{convention}

\subsection{$F$-signatures and strong
  $F$-regularity} \label{sec.FsignAndSFR} In view of Kunz's theorem
\cite{KunzCharacterizationsOfRegularLocalRings}, it is natural to use
Frobenius splittings to measure singularities in positive
characteristic. Strongly $F$-regular singularities were introduced by
M.~Hochster and C.~Huneke
\cite{HochsterHunekeTightClosureAndStrongFRegularity}. Let
$(R,\mmm,\kay,K)$ be a local ring. The \emph{$e^\mathrm{th}$ $F$-splitting
  number of $R$} is defined by 
\[
  a_e(R) \coloneqq
  \lambda_R \big(\Hom_R(F^e_* R, R)\big/\Hom_R(F^e_* R, \mmm)\big)
  \in \N
\]
where $\lambda_R(M)$ denotes the length of an $R$-module $M$. This
number turns out to be the maximum of the set
$\{a \mid\/ \exists\, F^e_*R \to R^{\oplus a}
\textnormal{ surjective}\} \subset \Z$;
see \cite{BlickleSchwedeTuckerFSigPairs1}. In particular, setting
$\delta \coloneqq \dim R + \log_p [\kay^{1/p}:\kay]$, we have $0 \leq
a_e(R) \leq p^{e \delta}$ where $p^{e \delta}$ is the (generic) rank
of $F^e_*R$ and at the same time the free rank of $F^e_*R$ if $R$ were
regular. For details, see
\cite{BlickleSchwedeTuckerFSigPairs1,TuckerPolstraUniformApproach}.

In this way, we may reinterpret Kunz's theorem by saying that: $R$ is regular if and only if $a_e(R) = p^{e\delta}$ for some (equivalently all) $e \geq 1$. Moreover, one says that $R$ is \emph{$F$-pure} if $a_e \neq 0$ for some (equivalently all) $e \geq 0$. To define \emph{strong $F$-regularity}, one considers the asymptotic behavior of the splitting numbers as $e \rightarrow \infty$. To be precise, after \cite{HunekeLeuschkeTwoTheoremsAboutMaximal,SmithVanDenBerghSimplicityOfDiff}, one defines the $F$-signature of $R$ as the limit
\[
s(R) \coloneqq \lim_{e \rightarrow \infty} \frac{a_e(R)}{p^{e\delta}} \in [0,1],
\]
whose existence was established in \cite{TuckerFSigExists}. Then, $R$ is \emph{strongly $F$-regular} if and only if $s(R) > 0$; see \cite[Theorem 0.2]{AberbachLeuschke}. On the other hand, it turns out that $s(R)=1$ if and only if $R$ is regular; see \cite[Corollary~16]{HunekeLeuschkeTwoTheoremsAboutMaximal}.

Intuitively, one often thinks of $s(R)$ as a volume attached to the singularity $R$, \cf \cite{VonKorffFSigOfAffineToric}. Furthermore, $F$-signatures have been linked (conjecturally) to normalized volumes; see \cite[\S6.3.1]{ChiLiuChenyangNormalizedVolumes}.  For surveys on the subject of strong $F$-regularity, we recommend \cite{SmithZhang,SchwedeTuckerTestIdealSurvey}. See \cite{TuckerPolstraUniformApproach} for more on $F$-signatures.

\subsection{Linear and bilinear forms} \label{Forms} Let $M$ be a module over a commutative ring $R$. We denote its dual $\Hom_R(M,R)$ by $M^{\vee}$. A \emph{bilinear form} on $M$ over $R$ is the same as an element of $(M\otimes_R M)^{\vee}\eqqcolon  M^{\vee 2}$. By $\otimes$-$\Hom$ adjointness, there are two natural isomorphisms $\upsilon_i\mathrel{:}  M^{\vee 2} \to \Hom_R(M, M^{\vee})$. A bilinear form $\Theta$ is \emph{symmetric} if $\upsilon_1 (\Theta ) = \upsilon_2 (\Theta)$, and we write $\upsilon (\Theta)$ for either one. It is said to be \emph{nondegenerate} (resp. \emph{nonsingular}) if $\upsilon (\Theta )$ is injective (resp. an isomorphism). If $M$ is free of finite rank, there is a determinant function $\det\mathrel{:} M^{\vee 2} \to R$. Note that $\Theta$ is nondegenerate (resp. nonsingular) if and only if $\det \Theta$ is a nonzerodivisor (resp. a unit). If $M$ is locally free of finite rank, we associate to $\Theta$ a locally principal ideal $\det \Theta $ of $R$, for naturally $(M^{\vee 2})_{\mathfrak{p}} \cong (M_{\mathfrak{p}})^{\vee 2} $. Say $S=M$ is an $R$-algebra, \ie there is a diagonal morphism $\Delta_{S/R}\mathrel{:}S \otimes_R S \to S$. Its dual $\Delta_{S/R}^{\vee}\mathrel{:} S^{\vee} \to (S \otimes_R S)^{\vee}$ gives a canonical way to define a bilinear form from a linear one. We refer to $\theta \in S^{\vee}$ as nondegenerate or nonsingular if so is $\Delta_{S/R}^{\vee}(\theta)$. If $S/R$ is finite and flat, one defines the \emph{discriminant} of $\theta$ as $\disc \theta \coloneqq \det \Delta_{S/R}^{\vee}(\theta)$.

\subsection{Group-schemes and their actions on schemes}
For the most part, we follow
\cite{MilneAlgebraicGroups,TateFiniteFlatGroupSchemes,MontgomeryHopfAlgebras}. Let
$G/\kay$ be an affine group-scheme. We denote by $\sO(G)$ its
corresponding (commutative) Hopf algebra. Given a Hopf algebra
$H/\kay$, we denote by $u\mathrel{:} \kay \to H $ and
$\Delta\mathrel{:} H \otimes H \to H $ the algebra morphisms of
\emph{unit} and \emph{product} whereas the coalgebra morphisms
\emph{counit} and $\emph{coproduct}$ are denoted by $e\mathrel{:} H
\to \kay $ and $\nabla\mathrel{:} H \to H \otimes H$. For the
\emph{antipode}, we use $\iota \mathrel{:} H \to H$. The trivial
group-scheme is denoted by $\ast$. If $G/\kay$ is finite, its
\emph{order} is $o(G)\coloneqq\dim_{\kay} \sO(G)$. If $G$ is
commutative,  its \emph{Cartier dual} is denoted by $G^{\vee}$; see
\cite[11.c]{MilneAlgebraicGroups}. The dual of a finite Hopf algebra
$H$ is denoted by $H^{\vee}$; see
\cite[\S1.2]{MontgomeryHopfAlgebras}. In particular
$\sO(G^{\vee})=\sO(G)^{\vee}$. The connected-\'etale exact sequence
\cite[5.h]{MilneAlgebraicGroups} is denoted by 
\[
\ast \to G^{\circ} \to G \to \pi_0(G) \to \ast,
\]
which splits as $\kay$ is perfect \cite[11.3]{MilneAlgebraicGroups}. In other words, $G=G^{\circ} \rtimes \pi_0(G)$. In fact, $G_{\mathrm{red}} \cong \pi_0(G)$ and the splitting is (uniquely) given by the canonical closed embedding $G_{\mathrm{red}} \to G$.
 
\subsubsection{Examples of affine algebraic groups} 
Finite \emph{constant group-schemes} are examples of finite \'etale algebraic groups and (since $\kay$ is separably closed) these are the only ones \cite[2.b]{MilneAlgebraicGroups}. If $G$ is a finite discrete group, the Hopf algebra of the constant group-scheme it defines (also denoted by $G$) is $\sO(G)=\Hom_{\mathsf{Set}}(G,\kay)$. The coproduct is defined by 
\begin{align*}
\nabla\mathrel{:}\Hom_{\mathsf{Set}}(G,\kay) &\to  \Hom_{\mathsf{Set}}(G \times G,\kay) \xleftarrow{\cong} \Hom_{\mathsf{Set}}(G,\kay) \otimes \Hom_{\mathsf{Set}}(G,\kay),\\
\gamma &\mapsto ((g,h)\mapsto \gamma(gh) )
\end{align*} 
The counit is the evaluation-at-$1$ map and the antipode is given by: $\iota(\gamma)(g)\coloneqq\gamma(g^{-1})$ for all $\gamma \in \sO(G)$, $g\in G$ \cite[2.3, Exercise 3-1]{MilneAlgebraicGroups}. 

From a group $\Gamma$, one constructs another Hopf algebra as follows. The underlying algebra is $\kay[\Gamma]$---the group $\kay$-algebra of $\Gamma$. The coproduct, counit, and antipode are given by $\nabla(g)=g \otimes g$, $e(g)=1$, $ \iota(g)=g^{-1}$,  for all $g\in \Gamma$.  If $\Gamma$ is abelian and finitely generated, we denote the corresponding algebraic group by $D(\Gamma)$ and refer to them as the \emph{diagonalizable} algebraic groups \cite[Chapter~12]{MilneAlgebraicGroups}. For example, $D(\Z)$ is the \emph{multiplicative group} $\mathbb{G}_{\textnormal{m}}$, with $\sO(\mathbb{G}_{\textnormal{m}})=\kay[\zeta, \zeta^{-1}]$. Also, $D(\Z/n)$ is the \emph{group of $n$-th roots of unity} $\mu_n$, with $\sO(\mu_n) = \kay[\zeta]/(\zeta^n-1)$.  Likewise,  $D(\Z^{\oplus n})=\mathbb{D}_n$ is the group of invertible diagonal $n\times n$ matrices. In fact, $D$ is an exact contravariant functor \cite[12.9]{MilneAlgebraicGroups}. Applying it to $
0 \to \Z \xrightarrow{\cdot n} \Z \to \Z/n\to 0 $ yields the \emph{Kummer sequence}:
\begin{equation} \label{eqn.KummerSES}
\ast \to \mu_n \to \mathbb{G}_{\textnormal{m}} \to \mathbb{G}_{\textnormal{m}} \to \ast.
\end{equation}

The \emph{additive group} $\mathbb{G}_{\textnormal{a}}$ (see \cite[2.1]{MilneAlgebraicGroups}) is represented by the Hopf algebra $\kay[\xi]$ where $\nabla(\xi)=\xi \otimes 1 + 1 \otimes \xi$, $e(\xi)=0$ and $\iota(\xi)=-\xi$. Its $e^\mathrm{th}$ $\kay$-linear Frobenius homomorphism $\sO(\mathbb{G}_{\textnormal{a}}) \to \sO(\mathbb{G}_{\textnormal{a}})$ is given by $\xi \mapsto \xi^{p^e}$. It is an injective homomorphism of Hopf algebras and so it is faithfully flat \cite[3.i]{MilneAlgebraicGroups}. The kernel of the corresponding quotient homomorphism $F^e\mathrel{:}\mathbb{G}_{\textnormal{a}} \to \mathbb{G}_{\textnormal{a}}$ is the Cartier self-dual \emph{infinitesimal group-scheme} $\alpha_{p^{e}}$ \cite[2.5]{MilneAlgebraicGroups}, which is represented by $\sO(\alpha_{p^e})=\kay[\xi]/\xi^{p^e}$. Thus, we have the following short exact sequence
\begin{equation} \label{eqn.InfinitesimalSES}
\ast \to \alpha_{p^e} \to \mathbb{G}_{\textnormal{a}}
\xrightarrow{F^e} \mathbb{G}_{\textnormal{a}} \to \ast.
\end{equation}
Likewise, we have the \emph{Artin--Schreier exact sequence}
\begin{equation} \label{eqn.ArtinSchreierSES}
\ast \to \Z/p \to \mathbb{G}_{\textnormal{a}} \xrightarrow{F-\id}
\mathbb{G}_{\textnormal{a}} \to \ast,
\end{equation}
where $F-\id$ is given by $\xi \mapsto \xi^p -\xi$ \cite[Exercise
14-3]{MilneAlgebraicGroups}. 

Let $\mathbb{T}_n \subset \textnormal{GL}_n$ be the subgroup of upper triangular matrices and $\mathbb{U}_n \subset \mathbb{T}_n$ be the one of upper triangular matrices with $1$ along the diagonal \cite[2.9]{MilneAlgebraicGroups}.  In fact, $\mathbb{U}_n$ is a normal subgroup of $\mathbb{T}_n $ whose quotient is $\mathbb{D}_n$ and $\mathbb{T}_n = \mathbb{U}_n \rtimes \mathbb{D}_n$.

\subsubsection{Trigonalizable groups} \label{sec.TrigonalizableGroups}
The \emph{coradical} $C_0$ of a coalgebra $C$ is defined as the sum of its simple subcoalgebras \cite[5.1.5]{MontgomeryHopfAlgebras}. This definition applies to a Hopf algebra as well under the caveat that its coradical may not be a Hopf subalgebra. However, we have:
\[
\kay \cdot 1 = \kay[\{1\}] \subset \kay[X(H)] \subset H_0 \subset H,
\]
where $X(H)=\{h \in H\mid \nabla(h)=h\otimes h\}$ is the abelian group of \emph{group-like elements} of $H$ (also known as the \emph{characters} of $G$ if $H=\sO(G)$ \cite[12.a]{MilneAlgebraicGroups}). See the discussion proceeding \cite[5.1.5]{MontgomeryHopfAlgebras}. An affine group-scheme $G/\kay$ is said to be \emph{unipotent} if $\kay \cdot 1 = \sO(G)_0$; see \cite[14.5]{MilneAlgebraicGroups}, \cite[\S 5.2]{MontgomeryHopfAlgebras}. On the opposite extreme, it is said to be \emph{linearly reductive} if $\sO(G)_0= \sO(G)$.\footnote{In other words,  $\sO(G)$ is \emph{cosemisimple} \cite[2.4.1]{MontgomeryHopfAlgebras}, which agrees with  \cite[12.l]{MilneAlgebraicGroups}; see \cite[2.4.6]{MontgomeryHopfAlgebras}.} If $\kay[X(G)]=\sO(G)_0$, $G$ is said to be \emph{trigonalizable}.\footnote{That is,  $G$ is trigonalizable if $\sO(G)$ is \emph{pointed}; see \cite[5.1.5]{MontgomeryHopfAlgebras} and the discussion proceeding it.  This coincides with \cite[16.1]{MilneAlgebraicGroups}; see \cite[Exercise 16-2]{MilneAlgebraicGroups}}  Diagonalizable group-schemes are those for which $\kay[X(G)]= \sO(G)$ \cite[12.7, 12.8]{MilneAlgebraicGroups},  and so they  are linearly reductive.  Both unipotent and diagonalizable groups are trigonalizable.

Since $\kay$ is separably closed,  we may refer to diagonalizable group-schemes as group-schemes of \emph{multiplicative type}; see \cite[12.8]{MilneAlgebraicGroups}.  These are the commutative linearly reductive groups \cite[12.54]{MilneAlgebraicGroups}.  See  \cite[12.18]{MilneAlgebraicGroups} for more. Nagata's theorem \cite{NagataCompleteReducibility} establishes that $G$ is linearly reductive if and only if $p \nmid o(\pi_0(G)) $ and $G^{\circ}$ is of multiplicative type; see \cite[Theorem 5.7.4]{MontgomeryHopfAlgebras}, \cite[12.56]{MilneAlgebraicGroups}. Thus,   $G$ is linearly reductive if and only if $\pi_0(G)$ has prime-to-$p$ order and $G^{\circ}=D(\Gamma)$ for some abelian group $\Gamma$ whose torsion is $p$-torsion.  

Unipotent groups are,  up to isomorphism,  the subgroups of $\mathbb{U}_n$ \cite[14.4]{MilneAlgebraicGroups}.  Now,  $\mathbb{U}_n$ has a central normal series (\cite[6.f]{MilneAlgebraicGroups}) whose intermediate quotients are canonically isomorphic to $\mathbb{G}_{\textnormal{a}}$.  Hence,  every unipotent group admits a central normal series whose quotients are (isomorphic to) subgroups of $\mathbb{G}_{\textnormal{a}}$ \cite[14.21]{MilneAlgebraicGroups}.  Thus, $\mathbb{G}_{\textnormal{a}}$ and its subgroups (\textit{e.g.}  $\alpha_p$ and $\Z/p$) are the building blocks for unipotent groups. Likewise,  the trigonalizable groups are the subgroups of $\mathbb{T}_n$ \cite[16.2]{MilneAlgebraicGroups}.  A trigonalizable group $G$ admits a factorization $G_{\mathrm{u}} \rtimes G_{\mathrm{mt}}$ where $G_{\mathrm{u}}$ is unipotent and $G_{\mathrm{mt}}$ is of multiplicative type; see \cite[16.6, 16.26]{MilneAlgebraicGroups}.

\subsubsection{Actions, quotients, and torsors}
 \label{def.torsors}
Let $Y/\kay$ be a scheme and $G/\kay$ be an affine group-scheme.  A \emph{(right) action} of $G$ on $Y$ is a $\kay$-morphism $a\mathrel{:}  Y \times G \to Y$ such that 
\[
\xymatrixcolsep{3pc}\xymatrix{
Y \times G \times G  \ar[r]^-{\id \times \nabla } \ar[d]_-{ a \times \id} & Y \times G \ar[d]^-{a}\\ 
Y \times G \ar[r]^-{a} & Y
}\qquad
\xymatrix{
Y \times \ast  \ar[r]^-{\id \times e} \ar[d]_-{\cong} & Y \times G \ar[dl]^-{a}\\
Y
}
\]
are commutative.  A $\kay$-morphism $f\mathrel{:}Y \to X$ is said to be \emph{$G$-invariant} if $a\mathrel{:} Y \times G \to Y$ is an $X$-morphism.  A \emph{(categorical) quotient} of $a\mathrel{:} Y \times G \to Y$ is a $G$-invariant morphism $q\mathrel{:} Y \to X$ that factors uniquely any other one.  If it exists,  it is unique up to unique isomorphism. See \cite[III, \S12]{MumfordAbelianVarieties}, \cite[IV]{AbelianVarietiesGeerMooen}, \cite[\S2.6]{BrionSomeStructureTheoremsForAlgebraicGroups}.

A \emph{$G$-torsor} is a principal $G$-bundle in the fppf-topology. That is,  it is an fppf morphism $q\mathrel{:}Y \to X$ together with an action $a\mathrel{:} Y \times G \to Y $ such that $q$ is $G$-invariant and the induced morphism $a \times \mathrm{pr}_1\mathrel{:} Y \times G \to Y \times_X Y$ is an isomorphism; see \cite[III, Proposition 4.1]{MilneEtaleCohomology}. Torsors are quotients as fppf maps are strict epimorphisms \cite[I, Theorem 2.17]{MilneEtaleCohomology}.  Since $G/\kay$ is affine,  the isomorphism classes of $G$-torsors over $X$ are naturally classified by the pointed-set $\check{H}^1(X_{\textnormal{fl}},G)$; see \cite[III, \S4]{MilneEtaleCohomology}, \cite{GiraudNonabelianCohomology}. The distinguished point is the class of the trivial $G$-torsor $\mathrm{pr}_1\mathrel{:} X \times G \to X$. If $\phi \mathrel{:} G \to H$ is a homomorphism,  one defines the map of pointed-sets $\check{H}^1(\phi) \mathrel{:} \check{H}^1(X_{\textnormal{fl}},G) \to \check{H}^1(X_{\textnormal{fl}},H)$ as follows \cite[III, 2.4.2.1]{GiraudNonabelianCohomology}. If $q\mathrel{:}Y \to X$ is a $G$-torsor, $\check{H}^1(\phi)(q)$ is the \emph{contracted product} $Y \wedge^{G} H $ \cite[III, \S1.3]{GiraudNonabelianCohomology}.  By definition, $Y \wedge^{G} H$ is the quotient of $Y \times H$ by the diagonal right action of $G$: $(y,h)\cdot g \mapsto \bigr(y\cdot g, \phi(g)^{-1} h\bigl)$:
\[
  Y \wedge^{G} H
  \coloneqq
  Y \times H \Big/ \raisebox{-.5ex}{$(yg, h) \sim (y,\phi(g)h)$}.
\]
The right action of $H$ on $Y \wedge^G H $ is given by the rule $(y,
h_1) \cdot h_2 = (y, h_2h_1)$. The morphism $Y \wedge^G H
\to X$, say $(y,h) \mapsto f(y)$, is an $H$-torsor under this action
\cite[III, 1.4.6, \cf 1.3.6]{GiraudNonabelianCohomology}.\footnote{In
  fact, $Y \wedge^G H \to X$ is the morphism corresponding to 
  $f\mathrel{:} Y \to X$ in the adjointness
  \cite[1.3.6.(iii)]{GiraudNonabelianCohomology}.} If $G$ is
commutative, $\check{H}^1(X_{\textnormal{fl}},G)$ coincides with the
derived-functor cohomology abelian group $H^1(X_{\textnormal{fl}},G)$
\cite[III, 3.5.4]{GiraudNonabelianCohomology}. Moreover, given a short
exact sequence
\[
\ast \to G' \to G \to G'' \to \ast
\]
we have an exact sequence of pointed sets
\[
* \to G'(X) \to G(X) \to G''(X) \xrightarrow{\delta^0}
\check{H}^1(X_{\textnormal{fl}},G') \to
\check{H}^1(X_{\textnormal{fl}},G) \to
\check{H}^1(X_{\textnormal{fl}},G'')
\]
which can be continued using Giraud's second nonabelian cohomology
\cite{GiraudNonabelianCohomology,DebremaekerNonAbelianCohomology}
\[
\cdots \to \check{H}^1(X_{\textnormal{fl}},G'')
\xrightarrow{\delta^1} \check{H}^2(X_{\textnormal{fl}},G')
\to \check{H}^2(X_{\textnormal{fl}},G) \to
\check{H}^2(X_{\textnormal{fl}},G'') 
\]
In the commutative case,  this agrees with the long exact sequence from derived-functor abelian cohomology with respect to the fppf site \cite[III, \S3.4]{GiraudNonabelianCohomology}. Thus, when $G$ is commutative, we shall often write $H^i(X_{\textnormal{fl}},G)=\check{H}^i(X_{\textnormal{fl}},G)$ for $i=0,1,2$.

\begin{remark} \label{rem:GaloisCorrespondence}
Let $G \rightarrow G''$ be a faithfully flat homomorphism of finite group-schemes over $\kay$ with kernel $G' \to G$. Let $Y \to X$ be a $G$-torsor. Then, the restricted action $Y \times G' \to Y$ realizes the canonical morphism $Y \to Y\wedge^G G'' \eqqcolon Y' $,  say $y \mapsto (y, e'')$,  as a $G'$-torsor. Indeed,  $Y\to Y'$ is a quotient of $Y$ by $G'$ \cite[3.2.5]{GiraudNonabelianCohomology}.  Since $G$ acts freely on $Y$ (\ie $Y \times G \to Y \times Y$ is a closed immersion) then so does $G'$.  Hence, $Y \times G' \to Y \times_{Y'} Y$ is an isomorphism \cite[3.1.2.(i)]{GiraudNonabelianCohomology}, and $Y \to Y'$ is fppf \cite[III, Theorem 1]{MumfordAbelianVarieties}, \cite[Theorem 4.16.(iii)]{AbelianVarietiesGeerMooen}.
\end{remark}

\begin{remark} \label{quotients}
Let $G/\kay$ be a finite algebraic group acting on a scheme
$Y/\kay$. If the orbit of every closed point of $Y$ is contained in an
affine chart (see \cite[IV]{AbelianVarietiesGeerMooen}) then the
quotient $q: Y \to X$ exists \cite[4.16, \cf
4.13]{AbelianVarietiesGeerMooen}. Also see the more classical reference \cite[III,\S2, Corollaire 6.1]{DemazureGrabielGroupes}. Moreover, $q \mathrel{:} Y \to X$ is
quasi-finite and integral (\cf \cite[\S4.2]{MontgomeryHopfAlgebras}),
and $q^{\#}\mathrel{:}\sO_X \to q_* \sO_Y$ induces an
isomorphism $\sO_X \xrightarrow{} (q_*\sO_Y)^G$.\footnote{%
  The sheaf $(q_*\sO_Y)^G$ is the subsheaf of $q_* \sO_Y$ of
  $G$-invariant functions; see \cite[4.12]{AbelianVarietiesGeerMooen},
  \cite[III, 12]{MumfordAbelianVarieties}.}
Thus, $q\mathrel{:} Y \to X$ is locally given by spectra of rings of
invariants.  \emph{By a finite $G$-quotient $q \mathrel{:} Y \to X$, we always
mean a morphism as above.} If the action of $G$ on $Y$ is free
(\textit{e.g.} $q$ is a $G$-torsor) then $q \mathrel{:} Y \to X$ is finite and flat.
Likewise, if $Y$, $X$ are $\mathbf{S}_2$ and $G$ acts freely on a big
open of $Y$ (\textit{e.g.}  $q$ is a quasitorsor), then $q$ is finite.
\end{remark}

Let us restate the axioms for actions in a way that will be useful later. We may base change $(G, \nabla, e, \iota)$ by $Y/\kay$ to get $(G_Y, \nabla_Y, e_Y, \iota_Y)$---a group-scheme over $Y$.  The two axioms for right actions translate into the commutative diagrams
\[
\xymatrix{
G_Y \times_Y  G_Y  \ar[r]^-{ \nabla_Y } \ar[d]_-{ a \times \id} & G_Y \ar[d]^-{a}\\ 
 G_Y \ar[r]^-{a} & Y
}\qquad
\xymatrix{
Y \ar[r]^-{e_Y} \ar[d]_-{\id} & G_Y \ar[dl]^-{a}\\
Y
}
\]
If $Y= \Spec S$, a (right) action of $G$ on $Y$ is a (right) \emph{coaction} of $\sO(G)$ on $S$, \ie a homomorphism $\alpha \coloneqq a^{\#}\mathrel{:} S \to S \otimes \sO(G)$ satisfying the following two commutative diagrams:
\[
\xymatrixcolsep{3pc}\xymatrix{
S \otimes \sO(G) \ar[r]^-{\id \otimes \nabla} & S\otimes \sO(G) \otimes \sO(G) \\
S \ar[r]^-{\alpha} \ar[u]^-{\alpha} & S \otimes \sO(G) \ar[u]_-{\alpha \otimes \id}
} \qquad
\xymatrix{
S \otimes \kay & S \otimes \sO(G) \ar[l]_-{\id \otimes e}\\
S \ar[u]^-{\cong} \ar[ru]_{\alpha} &
}
\]
The \emph{ring of coinvariants} is $S^{G} \coloneqq \{s\in S \mid \alpha(s)=s\otimes 1\}$. As before, base changing $\sO(G)$ by $S/\kay$ gives the Hopf $S$-algebra $\sO(G_S)$ representing the group-scheme $G_S/S$. The coproduct $\nabla_S$ is given by composition of $\id \otimes \nabla $ with $S \otimes \sO(G) \otimes \sO(G) \xrightarrow{\cong} (S\otimes \sO(G)) \otimes_S (S\otimes \sO(G)) $ and the identity $e_S$ is given by $\id  \otimes e$. Thus, the coaction axioms are the commutative diagrams:
\[
\xymatrixcolsep{3pc}\xymatrix{
\sO(G_S) \ar[r]^-{\nabla_S} & \sO(G_S) \otimes_S \sO(G_S)\\
S \ar[r]^-{\alpha} \ar[u]^-{\alpha} & \sO(G_S) \ar[u]_-{\alpha \otimes \id}
} \qquad
\xymatrix{
 S & \sO(G_ S) \ar[l]_-{e_S}\\
S \ar[u]^{\id} \ar[ru]_{\alpha} &
}
\]
In particular, $\alpha$ is injective by the second axiom.

\begin{remark} \label{rem.ActionsCoactions}
We said above that a (right) action of $G$ on $\Spec S$ is the same as a (right) coaction of $\sO(G)$ on $S$. This is further equivalent to a (left) action of $\sO(G)^{\vee}$ on $S$.  See \cite[Ch.  4]{MontgomeryHopfAlgebras}.  Roughly speaking, a (left) \emph{action} of a Hopf algebra $H$ on $S$ is a $\kay$-linear map $\beta\mathrel{:}H \otimes S \to S$ (\ie a left $H$-module structure on $S$) satisfying a pair of axioms dual to the ones we had for coactions. The \emph{ring of invariants} is defined by $\{s \in S \mid h \cdot s = e(h)s \text{ for all } h \in H\}$. If $H/\kay$ is finite,  an action (resp. coaction) of $H$ is the same as a coaction (resp. action) of $H^{\vee}$ in such a way that rings of invariants and coinvariants coincide. Indeed, if $H$ coacts on $S$ by $\alpha\mathrel{:}S \to S \otimes H$, then $H^{\vee}$ acts on $S$ via $\eta \cdot s = (\id \otimes \eta)(\alpha(s))$ for all $\eta \in H^{\vee}$, $s\in S$. It is clear that coinvariant elements are invariant. The converse is true yet it relies on $H/\kay$ being finite: to check two elements (\textit{e.g.} $\alpha(s)$ and $s\otimes 1$) in the \emph{finite rank free} $S$-module $S\otimes H$ are the same, it suffices to show that their images under $\id \otimes \eta$ agree for all $\eta \in H^{\vee}$ as these maps generate the $S$-dual module of $S \otimes H$.  Thus, it is \emph{a priori} easier to show that an element is an invariant than a coinvariant although these are \emph{a posteriori} equivalent due to finiteness.  Finally,  given an action $H^{\vee} \otimes S \to S$, its associated coaction is only defined after choosing a $\kay$-basis for $H$. If $h_1,\ldots ,h_d$ is a basis with corresponding dual basis $\eta_1,\ldots ,\eta_d$, then the coaction is given by the rule $s \mapsto \sum_{i=1}^d(\eta_i \cdot s) \otimes h_i$. 
\end{remark}

\section{Traces of quotients by finite group-schemes} \label{Trace}
Let $q\mathrel{:} X \to Y$ be a finite $G$-quotient as in \autoref{quotients}. In this section, we construct an $\sO_X$-linear map $\Tr_{Y/X}\mathrel{:} q_* \sO_Y \to \sO_X$ generalizing the classic trace map for \'etale $G/\kay$. Further, we show its properties necessary to study \autoref{que:MainQiestion} in the same vein of \cite{CarvajalSchwedeTuckerEtaleFundFsignature}. 

\subsection{Construction of the trace} \label{ConstructionTrace} We start with the affine case and obtain the general one by gluing on affine charts.  If such trace maps $\Tr_{S/S^G} \mathrel{:} S \to S^G$ exist, they must exist for the action of $G$ on itself, where $\sO(G)^G=\kay$. We discuss this core case first and construct $\Tr_{S/S^G}$ from $\Tr_{G/\kay}$ and the given action. Our first goal then is to explain the existence of a special $\kay$-linear map $\Tr_{G}=\Tr_{G/\kay}\mathrel{:}\sO(G) \to \kay$. Since $\sO(G)$ is a Gorenstein finite $\kay$-algebra \cite[11.f]{MilneAlgebraicGroups}, the $\sO(G)$-module $\omega_G=u^!\omega_{\kay}=u^! \kay=\Hom_\kay(\sO(G), \kay)$ is free of rank $1$. $\Tr_{G}$ is going to be a special generator of $\omega_G$. Note that $\sO(G)$ coacts on itself via the coproduct $\nabla \mathrel{:} \sO(G) \to \sO(G) \otimes \sO(G)$, which means that $\sO(G)^{\vee}$ acts on $\sO(G)$. In fact, $g\cdot \gamma  = (\id \otimes g)\bigr(\nabla(\gamma)\bigl) $ for all $g\in \sO(G)^{\vee}$, $\gamma \in \sO(G)$. We want $\Tr_G \in \sO(G)^{\vee}$ to yield invariants when acting on elements via this action. That is, we need $\Tr_G \cdot \gamma$ to be an invariant for all $\gamma \in \sO(G)$, \textit{i.e.}:
$g\cdot(\Tr_G \cdot \gamma ) = g(1) (\Tr_G \cdot \gamma)$, for all  $g\in \sO(G)^{\vee}$. This gives the desired property: 
\[
g \cdot \Tr_G = e^{\vee}(g) \cdot \Tr_G, \qquad \forall g\in \sO(G)^{\vee}.
\] 
Following Hopf algebras nomenclature, we are requiring $\Tr_G$ to be a \emph{left integral} of the Hopf algebra $\sO(G)^{\vee}$. To the best of the author's knowledge, this concept was introduced by R.~Larson and M.~Sweedler in \cite{LarsonSweedlerTrace}. We summarize their definition, existence, and uniqueness in the following theorem. See  \cite[Ch. 2]{MontgomeryHopfAlgebras} for further details.

\begin{theorem}[{\cite[2.1.1,  2.1.3]{MontgomeryHopfAlgebras}}]
Let $H$ be a Hopf algebra. An element $t \in H$ is a \emph{left integral} if $ht=e(h)t$ for all $h \in H$. Left integrals form a $\kay$-submodule of $H$ denoted by $\int_H$. If $H/\kay$ is finite,  $\dim_{\kay} \int_H =1$. 
\end{theorem}

\emph{We take $\Tr_G$ to be a $\kay$-generator of $\int_{\sO(G)^{\vee}}$, which is unique up to scaling by elements of $\kay^\times$.} If there is $t \in \int_{\sO(G)^{\vee}}$ such that $t(1) \neq 0$, we normalize to have $\Tr_{G}(1)=1$. Maschke's Theorem establishes that this amounts to $G$ being linearly reductive; see \cite[2.2.1, 2.4.6]{MontgomeryHopfAlgebras}.

\begin{remark}[Geometric description of integrals]
Let $G/\kay$ be a group-scheme. A \emph{Hopf module} on $G$ is a quasi-coherent $\sO_G$-module $\sF$ endowed with an equivariant $\sO_G$-module structure. More precisely, $\sF$ is a quasi-coherent (right) $\sO_G$-module equipped with an $\sO_G$-linear (right) coaction $\rho \mathrel{:} \sF \to \sF \otimes \sO_G$, where the (right) $\sO_G$-linear structure of $\sF \otimes \sO_G$ is
\[
(\sF \otimes \sO_G) \otimes \sO_G \xrightarrow{\id \otimes \nabla} (\sF \otimes \sO_G) \otimes (\sO_G \otimes \sO_G) \xrightarrow{\id \otimes \tau \otimes \id} (\sF \otimes \sO_G) \otimes (\sO_G \otimes \sO_G) \xrightarrow{c\otimes \Delta} \sF\otimes \sO_G, 
\]
where $\id \otimes \tau \otimes \id$ is the isomorphism swapping the two middle tensor factors of $(\sF \otimes \sO_G) \otimes (\sO_G \otimes \sO_G)$, and $c \mathrel{:} \sF \otimes \sO_G \to \sF$ is the $\sO_G$-linear structure of $\sF$.  In other words,  $\sF$ is equipped with a structural map realizing it as a right $\sO_G$-comodule \cite[\S1.6]{MontgomeryHopfAlgebras}. One defines the category of Hopf modules in the obvious way; see \cite[\S 1.9]{MontgomeryHopfAlgebras}.

The Fundamental Theorem of Hopf modules \cite[\S1.9]{MontgomeryHopfAlgebras},  establishes that,  for every coherent Hopf module $\sF$ on a finite algebraic group $G/\kay$, there is a canonical isomorphism of Hopf modules $u^* \sF^{G} \to \sF$, where $\sF^G \coloneqq \{x \in \sF(G) \mid \rho(x) = x\otimes 1 \}$ is the $\kay$-submodule of (co)invariants of $\sF$ under the action of $G$.  The Hopf module structure of $u^* \sF^G$ is the one inherited from $\sO_G$. That is,  the coaction $u^* \sF^G \to u^* \sF^G \otimes u^*\sF^G$ is the tensor product of $\nabla \mathrel{:} \sO_G \to \sO_G \otimes \sO_G$ by $\sF^G$. By definition,  $\sF^{G}$ is a $\kay$-submodule of $u_* \sF$,  say $j \mathrel{:} \sF^G \to u_* \sF$.  Thus,  the canonical map $u^* \sF^{G} \to \sF$ is the composition $u^* \sF^ G \xrightarrow{u^*j} u^* u_* \sF \xrightarrow{\mathrm{can}} \sF$.  In particular,  since $u \circ e = \id$,  pulling back the canonical isomorphism $u^*\sF^G \to \sF$ along $e \mathrel{:} \ast \to G$ gives a canonical isomorphism of $\kay$-modules $\sF^G \to e^* \sF$. 

The canonical module $\omega_G = u^! \omega_{\kay}$ is a major example of a Hopf module. Indeed, $\omega_G$ (or rather its global sections) is a Hopf $\kay$-algebra, for it is the dual Hopf algebra of $\sO(G)$. In particular, there is a product morphism $\omega_G \otimes \omega_G \to \omega_G$ which can be conveniently rewritten as $\sO(G)^{\vee} \otimes \omega_G \to \omega_G$.  This gives a (left) action of $\sO(G)^{\vee} $ on $\omega_G$,  which induces a (right) coaction of $\sO(G)$ on $\omega_G$ (see \autoref{rem.ActionsCoactions}) giving the desired coaction map $\omega_G \to \omega_G \otimes \sO_G$.  The fact that this coaction is $\sO_G$-linear is the content of \cite[2.1.4]{MontgomeryHopfAlgebras}.

When the Fundamental Theorem of Hopf modules is applied to the Hopf module $\omega_G$, we obtain a canonical isomorphism $u^* (\omega_G)^G \to \omega_G$. On the other hand, it is an immediate consequence of the definitions that $(\omega_G)^G = \int_{\sO(G)^{\vee}}$. However, $\omega_G \cong \sO_G$, for $G$ is Gorenstein. Therefore, the $\kay$-module $\int_{\sO(G)^{\vee}}$ is $1$-dimensional and every free $\kay$-generator of $\int_{\sO(G)^{\vee}}$ is a free $\sO(G)$-generator of $\Hom_{\kay}(\sO(G), \kay)$. This is how \cite[2.1.3]{MontgomeryHopfAlgebras} works.
\end{remark}

\begin{remark} \label{rem.actionoftraceA}
 $\Tr_G$ is nonsingular \cite[Theorem 2.1.3]{MontgomeryHopfAlgebras}.
\end{remark}

\begin{remark} \label{rem.actionoftraceB} To be consistent with the forthcoming discussion,  we ought to define $\Tr_G$ as the $\kay$-linear map $\gamma \mapsto t \cdot \gamma$ (for a choice  $0 \neq t \in \int_{\sO(G)^{\vee}}$)
 rather than $\gamma \mapsto t(\gamma)$.  Nonetheless,  we get the same result either way as 
 \begin{align*}
 g(t \cdot \gamma ) = g((\textnormal{id} \otimes t)(\nabla(\gamma)))=(g \otimes t)(\nabla(\gamma))=(g\cdot t) (\gamma)=\left(g(1)t\right) (\gamma)&=g(1)t(\gamma)=g(t(\gamma)),
 \end{align*}
 for all $g \in \sO(G)^{\vee}$. In other words, the following diagram is commutative 
\[
\xymatrix{
\sO(G) \ar[r]^-{\nabla} \ar[d]_-{\Tr_{G}} & \sO(G) \otimes \sO(G) \ar[d]^-{ \textnormal{id} \otimes\Tr_{G}}\\
\kay \ar[r]^-{u} & \sO(G)
}
\]
\end{remark}

\begin{example} \label{ex.ExampleTraces}
Let $G$ be a finite discrete group. A left integral $t$ of $H\coloneqq\Hom_{\mathsf{Set}}(G,\kay)$ must satisfy $\gamma t = \gamma(1)t$ for all $\gamma \in H$. That is, $\gamma(g)  t(g) = \gamma(1)t(g)$ for all $\gamma \in H$, $g \in G$. Therefore, $\int_H=\kay \cdot \varepsilon$, where $\varepsilon(g)=0$ for all $g\neq 1$ and $\varepsilon(1)=1$. A left integral $t$ of $\kay[G]$ is characterized by $g t = t$ for all $g \in G$. For example, $t=\sum_{g \in G} g$ is a left integral and by uniqueness $\int_{\kay[G]}=\kay\cdot t$. Thus, for the constant group-scheme $G$, a trace $\Tr_G\mathrel{:}\sO(G) \to \kay$ is given by $\gamma \mapsto \sum_{g \in G}{\gamma(g)}$, which is the Reynolds operator. Note that $\Tr_G(1) = o(G)$. If $p \nmid o(G)$, we divide by $o(G)$ so that $\Tr_G(1) = 1$. For the diagonalizable group $D(G)$, a trace $\Tr_{D(G)}\mathrel{:} \kay[G] \to \kay$ is the projection onto the direct $\kay$-summand generated by $1$, so $\Tr_{\kay[G]}(1)=1$. For $\alpha_{p^e}$, a left integral $t$ has to satisfy $\xi^i \cdot t = 0$ for all $1 \leq i \leq p^{e}-1 $. For instance $t=\xi^{p^e-1}$, and so $\int_{\alpha_{p^e}}=\kay \cdot \xi^{p^{e}-1}$. Hence, a trace for $\alpha_{p^e}$ is obtained by projecting onto the direct $\kay$-summand generated by $\xi^{p^e-1}$. In particular, $\Tr_{\alpha_{p^e}}(1)=0$.
\end{example}

We work out the affine case next. Let $S$ be a $\kay$-algebra endowed with an action $\alpha\mathrel{:} S \to \sO(G_S)$ of $G$ on $\Spec S$; set $R = S^{G}$. Consider the $S$-linear map $\Tr_{G_S}\coloneqq \textnormal{id} \otimes \Tr_{G}\mathrel{:} \sO(G_S) \to S$ where the $S$-algebra structure of $\sO(G_S)$ is $u_S\mathrel{:}S \to \sO(G_S)$. Note that $\alpha \mathrel{:} S \to \sO(G_S)$ is an $R$-linear map as it is a ring homomorphism and $R=S^G$. We then get an $R$-linear map $\Tr_{G_S} \circ \alpha \mathrel{:} S \to S$, which is none other than $s \mapsto \Tr_G \cdot s$ (using the induced action of $\sO(G)^{\vee}$ on $S$). 

\begin{definition-proposition} \label{TraceCodomain} The $R$-linear map $\Tr_{G_S} \circ \alpha \mathrel{:} S \to S$ has image in $R$. One then defines $\Tr_{S/R} \mathrel{:} S \to R$ to be the restriction of the codomain.
\end{definition-proposition}
\begin{proof}
Recall that $\Tr_{G_S} \circ \alpha \mathrel{:} s \mapsto \Tr_G \cdot s$. One readily verifies that $\Tr_G \cdot s$ is an invariant under the action of $\sO(G)^{\vee}$ on $S$ and so it is a coinvariant under the coaction of $\sO(G)$; see \autoref{rem.ActionsCoactions}. However, we present a direct,  conceptual proof.  Consider the following diagram:
\[
\xymatrixrowsep{3pc}\xymatrixcolsep{6pc}\xymatrix{
S \ar[r]^-{\alpha} & \sO(G_S) \ar@/^/[r]^{\alpha \otimes \textnormal{id}} \ar@/_/[r]_{\nabla_S} \ar[d]_-{\Tr_{G_S}} & \sO(G_S) \otimes_S \sO(G_S) \ar[d]^-{\textnormal{id} \otimes \Tr_{G_S}}\\
R \ar[r]^-{\subset}
 & S \ar@/^/[r]^{\alpha}\ar@/_/[r]_{u_S} & \sO(G_S)
}
\]
The bottom sequence is exact by definition. The top sequence, although not necessarily exact, satisfies $\alpha(S)\subset \ker(\alpha \otimes \textnormal{id}, \nabla_S)$ according to first axiom of $\alpha$ being a coaction. Thus, it suffices to prove that the following two squares are commutative
\[
\xymatrixcolsep{3pc}\xymatrix{
\sO(G_S) \ar[r]^-{\alpha \otimes \id} \ar[d]_-{\Tr_{G_S}} & \sO(G_S) \otimes_S \sO(G_S) \ar[d]^-{\id \otimes \Tr_{G_S}}\\
S \ar[r]^-{\alpha} & \sO(G_S)
} \qquad
\xymatrix{
\sO(G_S) \ar[r]^-{\nabla_S} \ar[d]_-{\Tr_{G_S}} & \sO(G_S) \otimes_S \sO(G_S) \ar[d]^-{\id \otimes\Tr_{G_S}}\\
S \ar[r]^-{u_S} & \sO(G_S)
}
\]
The commutativity of the first square is fairly straightforward. The commutativity of the second one follows from base changing by $S$ the commutative square in \autoref{rem.actionoftraceB}.
\end{proof}
\begin{remark}
The trace map $\Tr_{S/R} \mathrel{:} S \to R$ depends on the action of $G$ on $S$ as well as on the choice of $\Tr_G \mathrel{:} \sO(G) \to \kay$. Our notation is misleading as it does not reflect that. We hope this ambiguity will not lead to confusion but would rather improve the readability. 
\end{remark}

For a general finite $G$-quotient $q\mathrel{:} Y \to X$, we may define an $\sO_X$-linear map $\Tr_{Y/X} \mathrel{:} q_* \sO_Y \to \sO_X$ by gluing the trace maps previously constructed on affine charts; see \autoref{quotients}. To show that trace maps can be glued together, we have:
\begin{proposition}
Let $R \subset S$ be a finite $G$-quotient and $W \subset R$ be a multiplicatively closed set. Then, $\Tr_{W^{-1}S/W^{-1}R}$ and $W^{-1}\Tr_{S/R}$ are the same $W^{-1}R$-linear map $W^{-1} S \to W^{-1}R$.
\end{proposition}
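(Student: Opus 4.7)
The idea is to show that every piece of the construction of $\Tr_{S/R}$ given in \autoref{ConstructionTrace} is natural with respect to the localization $R \rightsquigarrow W^{-1}R$, so that both $\Tr_{W^{-1}S/W^{-1}R}$ and $W^{-1}\Tr_{S/R}$ arise from the same recipe applied to the coaction of $G$ on $W^{-1}S$. Concretely, recall
\[
\Tr_{S/R} = (\id_S \otimes \Tr_G) \circ \alpha^{\#},
\]
with codomain restricted to $R$. We need to promote this formula to the localization.

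First I would check that $G$ acts on $W^{-1}S$ in a canonical way. Since $W \subset R = S^G$, the composition $S \xrightarrow{\alpha^\#} S \otimes \sO(G) \to W^{-1}S \otimes \sO(G)$ sends every $w \in W$ to $w \otimes 1$, hence extends uniquely to a homomorphism $\widetilde{\alpha}^\# \: W^{-1}S \to W^{-1}S \otimes \sO(G)$. Here I use the canonical isomorphism $W^{-1}(S \otimes_{\kay} \sO(G)) \cong W^{-1}S \otimes_{\kay} \sO(G)$, which is legitimate because $\sO(G)$ is a finite-dimensional $\kay$-vector space and localization commutes with finite direct sums. A quick diagram chase (using that the coaction axioms for $\alpha^\#$ are equalities of $R$-linear maps and that $W^{-1}(-)$ is exact) shows that $\widetilde{\alpha}^\#$ is again a coaction.

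Next I would identify the invariants: $(W^{-1}S)^G = W^{-1}R$. The subring $R \subset S$ is by definition the equalizer of $\alpha^\#$ and $u_S \circ \id_S$ (\emph{i.e.} $s \mapsto s \otimes 1$). Because $W^{-1}(-)$ is exact and commutes with the relevant tensor products by the previous paragraph, the equalizer of $\widetilde{\alpha}^\#$ and $s \mapsto s \otimes 1$ on $W^{-1}S$ is precisely $W^{-1}R$. Thus the trace $\Tr_{W^{-1}S/W^{-1}R}$ is defined by \autoref{TraceCodomain} and, using the \emph{same} left integral $\Tr_G \in \int_{\sO(G)^\vee}$ as in the construction of $\Tr_{S/R}$, equals the composition $(\id_{W^{-1}S} \otimes \Tr_G) \circ \widetilde{\alpha}^\#$.

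Finally, the whole matter reduces to the commutativity of
\[
\xymatrix{
W^{-1}S \ar[r]^-{\widetilde{\alpha}^\#} \ar[d]_{=} & W^{-1}S \otimes \sO(G) \ar[r]^-{\id \otimes \Tr_G} \ar[d]^{\cong} & W^{-1}S \ar[d]^{=}\\
W^{-1}S \ar[r]^-{W^{-1}\alpha^\#} & W^{-1}(S \otimes \sO(G)) \ar[r]^-{W^{-1}(\id \otimes \Tr_G)} & W^{-1}S,
}
\]
whose top row is $\Tr_{W^{-1}S/W^{-1}R}$ and whose bottom row is $W^{-1}\Tr_{S/R}$. The left square commutes by the very definition of $\widetilde{\alpha}^\#$; the right square commutes because $\id \otimes \Tr_G$ is built from the fixed $\kay$-linear map $\Tr_G$ by extension of scalars, and such extensions of scalars are manifestly compatible with localization. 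The main (and only real) subtlety is the identification $W^{-1}(S \otimes_\kay \sO(G)) \cong W^{-1}S \otimes_\kay \sO(G)$ and the verification that the two candidate coactions on $W^{-1}S$ coincide; everything else is formal.
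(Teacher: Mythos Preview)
Your proof is correct and follows essentially the same approach as the paper's: show the coaction localizes, show the ring of (co)invariants localizes to $W^{-1}R$, and conclude that the trace, being built from these data and the fixed $\Tr_G$, localizes as well. The paper's proof is considerably terser, simply asserting that ``actions and invariants localize, then so do trace maps,'' whereas you spell out the equalizer argument for invariants and the commutative diagram for the trace; but the substance is identical.
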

\begin{proof}
The localization of $\alpha \mathrel{:} S \to \sO(G_S)$ induces a coaction $W^{-1}\alpha\mathrel{:} W^{-1} S \to W^{-1}S \otimes \sO(G)$ of $\sO(G)$ on $W^{-1}S$; given by $s/u \mapsto 1/u \otimes \alpha(s)$. Further, the corresponding ring of coinvariants is $W^{-1}R$. It follows that $\Tr_{W^{-1}S/W^{-1}R}=W^{-1}\Tr_{S/R}$.
\end{proof}
To define $\Tr_{Y/X} \mathrel{:} q_* \sO_Y \to \sO_X$, we may proceed directly as follows. Let $\Tr_{G_Y} \mathrel{:} {p_1}_{*}\sO_{G_Y} \to \sO_Y$ be the pullback of $\Tr_G : \sO(G) \to \kay$ along $Y/\kay$, where $p_1: G_Y = Y \times G \to Y$ is the canonical projection. Apply $q_*$ to $\alpha \coloneqq a^{\#}\mathrel{:}\sO_Y \to a_* \sO_{G_Y}$ (recall that $a \mathrel{:} G_Y = Y \times G \to Y$ is the action of $G$ on $Y$) to get a morphism $q_* \alpha \mathrel{:} q_* \sO_Y \to q_* a_* \sO_{G_Y} \cong q_* {p_1}_* \sO_{G_Y}$. Composing $q_* \alpha \mathrel{:} q_*\sO_Y \to q_* p_1 \sO_{G_Y}$ with $q_*\Tr_{G_Y} \mathrel{:} q_* {p_1}_* \sO_{G_Y} \to q_* \sO_Y$ yields an $\sO_X$-linear map $q_* \sO_Y \to q_* \sO_Y$. By applying \autoref{TraceCodomain} locally, $q_* \sO_Y \to q_* \sO_Y$ restricts to a morphism $q_* \sO_Y \to (q_* \sO_Y)^{G}$. Using the canonical isomorphism $\sO_X \xrightarrow{} (q_* \sO_Y)^{G}$ gives the desired trace map $\Tr_{Y/X} \mathrel{:} q_*\sO_Y \to \sO_X$.

\subsection{Initial properties of the trace}  Let $L/\kay$ be a finite field extension and $G$ be a finite group acting (on the left) by $\kay$-endomorphisms on $L$. Letting $K\subset L$ denote the fixed subfield, then $L/K$ is Galois with Galois group $G$ if and only if $G$ acts faithfully on $L$. This is equivalent to $\Tr_{L/K}$ being nonsingular and having $[L:K]=o(G)$. We obtain the following analog of this principle as the main result of this section.
\begin{theorem} \label{thm.Torsor_and_trace}
A finite $G$-quotient $q\mathrel{:}Y \to X$ is a $G$-torsor if and only if $q_* \sO_Y$ is locally free of rank $o(G)$ and $\Tr_{Y/X}$ is nonsingular.
\end{theorem}
\begin{proof}
The statements are local on $X$ and so we may work on an affine chart $R\coloneqq S^{G}\subset S$. Consider the fibered coproduct diagram
\[
\xymatrix{
\sO(G_S)=S \otimes \sO(G)   \\
&S \otimes_R S \ar@{.>}[ul]|-{\varphi \coloneqq u_S \otimes \alpha} & S \ar[l]^-{p_2} \ar@/_/[ull]_-{\alpha} \\
&S \ar[u]_-{p_1} \ar@/^/[uul]^-{u_S}  &R \ar[l] \ar[u]}
\]
By definition, $R \subset S$ is a $G$-torsor if and only if it is a fppf map and $\varphi\coloneqq u_S \otimes \alpha$ is an isomorphism. In this case, $R \subset S$ is locally free of rank $o\coloneqq o(G)$. Thus, we may assume this and prove that $\varphi$ is an isomorphism if and only if $\Tr_{S/R}$ is nonsingular. Note that both statements are local on $R$ and that for all $\p \in \Spec R$, we have $\varphi_{\mathfrak{p}}=u_{S_{\mathfrak{p}}}\otimes \alpha_{\mathfrak{p}}$, and $\Tr_{S_{\mathfrak{p}}/R_{\mathfrak{p}}}=\Tr_{S/R} \otimes_R R_{\mathfrak{p}}$ by \autoref{TraceCodomain}. Then, we may assume that $R$ is local and so that $R \subset S$ is a semi-local free extension. 

To show $\varphi\mathrel{:} S\otimes_R S \to \sO(G_S)$ is an isomorphism, it suffices to do it when considered as an $S$-linear map, where the $S$-linear structures are given by $p_1$ and $u_S$; respectively. Let $s_1,\ldots ,s_o$ be a basis of $S/R$ and $\gamma_1,\ldots ,\gamma_o$ be one of $\sO(G)/\kay$. Thus, $1\otimes s_1,\ldots ,1\otimes s_o $ is a basis for $p_1\mathrel{:} S \to S \otimes_R S$ and $1 \otimes \gamma_1,\ldots ,1 \otimes \gamma_o $ is a basis for $u_S\mathrel{:} S \to \sO(G_S)$. Next, we set:
\[
\varphi(1 \otimes s_i)=(u_S \otimes \alpha)(1 \otimes s_i)= \alpha(s_i) = \sum_{m=1}^{o}{ a_i^m \otimes \gamma_m}.
\]
That is,  $M\coloneqq(a_i^m)_{m,i}$ is the $S$-matrix representation
of $\varphi$ in these bases. Thus, $\varphi$ is an isomorphism if and
only if $M$ is nonsingular.  We describe next the symmetric $R$-matrix
associated to $\Tr_{S/R}$ as an $R$-bilinear form on $S$, in terms of
$M$.  Let $T\coloneqq(\Tr_G(\gamma_m \gamma_n))_{m,n}$ be the
$\kay$-matrix representing $\Tr_G$ as a $\kay$-bilinear form on
$\sO(G)$, in the $\kay$-basis $\gamma_1,\ldots ,\gamma_o$.  Recall that
$T$ is nonsingular (as well as symmetric) by
\autoref{rem.actionoftraceA}.  We then have:

\begin{claim*} 
$M^{\top}TM$ is the matrix representation of $\Tr_{S/R}$ as an $R$-bilinear form on $S$ in the $R$-basis $s_1,\ldots ,s_o$.
\end{claim*}
\begin{proof}[Proof of the claim] This amounts to the following computation:
\begin{align*}
  \Tr_{S/R}(s_i \cdot s_j)
  &=\Tr_{G_S} (\alpha(s_i \cdot s_j))
    = \Tr_{G_S} (\alpha(s_i) \cdot \alpha ( s_j)) \\
  &= \Tr_{G_S} \left( \left(\sum_{m=1}^{o}{a_i^m \otimes \gamma_m}\right) \left( \sum_{n=1}^{o}{a_j^n\otimes \gamma_n}\right) \right)\\
  &= \Tr_{G_S} \left( \sum_{1\leq m,n \leq o}{a_i^m a_j^n\otimes \gamma_m \gamma_n} \right) \\
  &=  \sum_{1\leq m,n \leq o}{ a_i^m a_j^n} \cdot \Tr_{G}(\gamma_m \gamma_n) \\
  &= \sum_{1\leq m,n \leq o}{  a_i^m T_{mn} a_j^n} = (M^{\top}TM)_{ij}. 
\end{align*}
\end{proof}
Then, $\Tr_{S/R}$ is nonsingular if and only if so is $M$, for 
\[
  \disc \Tr_{S/R}= \det T \cdot (\det M)^2=\disc \Tr_G \cdot (\det
  M)^2.
\]
\end{proof}

\begin{scholium}[{\cf \cite[8.3.1]{MontgomeryHopfAlgebras},  \cite{KreimerTakeuchi}}] \label{sch.scholium}
With notation as in the proof of \autoref{thm.Torsor_and_trace}, suppose $R\subset S$ is locally free but $\varphi$ is only surjective. Then, $\Tr_{S/R}$ is nondegenerate. If $S$ is further a domain, then $\varphi$ is an isomorphism and so $\Tr_{S/R}$ is nonsingular.  
\begin{proof}
Let $d$ be the rank of $R \subset S$. The surjectivity of $\varphi$ implies $d \geq o$. Hence, the matrix $M$ defines a surjective $S$-linear map $S^{\oplus d} \to S^{\oplus o}$. Therefore, the $S$-linear transformation $M^{\top}: S^{\oplus o} \to S^{\oplus d}$ is injective as it is the $S$-dual of $M$. We claim that $M^{\top}T M$ defines an injective $R$-linear operator $R^{\oplus o} \to R^{\oplus o}$. Since $M^{\top}$ and $T$ are injective, it remains to explain why if $M \cdot \vec{v} = 0$ for a column vector $\vec{v} \in R^{\oplus o}$ then $\vec{v} = 0$. This is just another way to say that $\alpha$ is injective: if $\vec{v}=(r_1,\ldots ,r_d)^{\top}$ and $M \cdot \vec{v} = (t_1,\ldots ,t_o)^{\top}$, then $\alpha(r_1 s_1 + \cdots +r_d s_d) = t_1 \otimes \gamma_1 + \cdots + t_o \otimes \gamma_o $. However, $\alpha$ is injective for  $\id_S=e_S\circ \alpha$---the second action axiom. In conclusion, the determinant of $M^{\top} T M$ is a nonzerodivisor on $R$ and so $\Tr_{S/R}$ is nondegenerate. If $S$ is further a domain, the determinant of $M^{\top} T M$ is also a nonzerodivisor on $S$. Therefore, $M^{\top} T M$ defines an injective $S$-linear operator $S^{\oplus o} \to S^{\oplus o}$ forcing $M$ to be injective and so $\varphi$ to be an isomorphism.
\end{proof}
\end{scholium}

\begin{corollary}[{\cf \cite[VI, Theorem 6.8]{AltmanKleimanIntroToGrothendieckDuality}}] \label{DiscriminantCorollary}
Let $q\mathrel{:} Y \to X$ be a finite $G$-quotient. If $q$ is a $G$-quasitorsor\footnote{That is, $q_x$ is a $G$-torsor under the localized action for all codimension $1$ point $x\in X$; see \autoref{term.QuasiTorsor}} and flat, then it is a $G$-torsor. 
\end{corollary}
\begin{proof}
Since $q$ is flat, the sheaf of principal ideals $\disc \Tr_{Y/X} \subset \sO_X$ cuts out the locus of points $x \in X$ where $q_x \mathrel{:}Y \times_X \Spec \sO_{X,x} \to \Spec \sO_{X,x}$ is not a $G$-torsor under the induced action. Thus, if $q$ is not a $G$-torsor then it fails to be so in pure codimension $1$.  
\end{proof}

\begin{corollary}[{\cf \cite[I, Proposition 3.8]{MilneEtaleCohomology}}] \label{cor.OpenNatureTorsoness}
Let $q\mathrel{:} Y \to X$ be a finite $G$-quotient. The locus $W \subset X$ of points $x \in X$ where $q_x\mathrel{:}Y \times_X \Spec \sO_{X,x} \to \Spec \sO_{X,x}$ is a $G$-torsor is open.
\end{corollary}
\begin{proof}
Let $x \in W$. There is an open neighborhood $W' \ni x$ such that $q_{x'}$ is faithfully flat for all $x' \in W'$. Hence, the open $W' \setminus V(\disc \Tr_{q^{-1}W'/W'}) \ni x$ is contained in $W$. 
\end{proof}

\begin{corollary} \label{cor.Tr_generates} Let $R \subset S$ be a finite $G$-quotient. If $R \subset S$ is a $G$-torsor,  then $\Tr_{S/R}$ freely generates $\Hom_R(S,R)$ as an $S$-module. The same holds if $R\subset S$ is a $G$-quasitorsor and $R$, $S$ satisfy $\mathbf{S}_2$.
\end{corollary}
\begin{proof}
The first statement is a rephrasing of what it means that $\Tr_{S/R}$ is nonsingular, \ie the $S$-linear map $S \to \Hom_R(S,R)$ given by $s \mapsto \Tr_{S/R}(s \cdot -)$ is an isomorphism. For the second statement, note that $R \subset S$ is finite under those hypothesis (as the torsor locus is a big open). Then, $S$ and $\Hom_R(S, R)$ are both $\mathbf{S}_2$ $R$-modules. Indeed, $S$ is $\mathbf{S}_2$ as an $R$-module as depth is invariant under finite restriction of scalars. For the $\mathbf{S}_2$-ness of $\Hom_R(S, R)$, see \cite[\href{https://stacks.math.columbia.edu/tag/0AV6}{Tag 0AV6}]{stacks-project}. Hence, to check that the aforementioned map $S \to \Hom_R(S,R)$ is an isomorphism, it suffices to do it in codimension $1$ on $\Spec R$. This is the case if $R \subset S$ is a $G$-quasitorsor.
\end{proof}

\subsection{Cohomological tameness and total integrals}
We are ready to formulate the notion of \emph{tameness} our covers will have. In \cite{KerzSchmidtOnDifferentNotionsOfTameness}, several notions of tameness are considered yet \emph{cohomological tameness} resulted to be the strongest one. Such condition is going to be imposed by \emph{splinters}, which are those rings splitting off from any finite extension. As a matter of fact, strongly $F$-regular rings are splinters. See \cite{MaFSplittings,HochsterContractedIdealsFromIntegralExtensions}. 

\begin{definition}[\cite{ChinburgErezPappasTaylorTameActions,KerzSchmidtOnDifferentNotionsOfTameness}]
A finite $G$-quotient $S^G\subset S$ is \emph{(cohomologically) tame} if $\Tr_{S/S^G}$ is surjective.
\end{definition}

\begin{remark} By Maschke's Theorem, a finite $G$-quotient with $G$ linearly reductive is tame. Indeed,  $\Tr_G (1) = 1$ and so $\Tr_{S/S^G}(1) = \Tr_{G_S}\left(\alpha(1)\right) = \Tr_G(1) = 1$. In contrast, if $G$ is unipotent, $S^G \subset S$ is tame only if it is a trivial torsor \cite[Proposition 6.2]{ChinburgErezPappasTaylorTameActions}.
\end{remark}

\begin{remark}
In the Hopf algebras literature, the surjectivity of $\Tr_{S/S^G}$ is referred to as the existence of \emph{total integrals} for the right $\sO(G)$-comodule $S$. To the best of the author's knowledge, total integrals were introduced by Y.~Doi \cite{DoiAlgebraswithTotalIntegrals} yet they were formulated in slightly different terms. The equivalence between the existence of Doi's total integrals and the surjectivity of the trace was established in \cite{CohenFischmanSemisimpleExtensions}. See \cite[\S 4.3]{MontgomeryHopfAlgebras}.
\end{remark}

\begin{proposition} \label{pro:SplinterImpliesTame}
Let $R \subset S$ be a finite $G$-quotient. Suppose that $R$ is a splinter (and so normal) and that $S$ satisfies $\mathbf{S}_2$. If $R \subset S$ is a $G$-quasitorsor then it is tame.
\end{proposition}
\begin{proof}
By \autoref{cor.Tr_generates}, $\Tr_{S/R} \cdot S \cong \Hom_R(S,R)$ as $S$-modules.  Since $R$ is a splinter,  there is a splitting $\Tr_{S/R} \cdot s \mathrel{:} S \to R$ of $R \subset S$.  That is,  $\Tr_{S/R} \cdot s \mathrel{:} 1\mapsto 1 $ and so $\Tr_{S/R} \mathrel{:} s \mapsto 1$. 
\end{proof}

\section{On the existence of a maximal cover} \label{ExistenceofMaximalCover}

We come now to \autoref{que:MainQiestion}. \emph{We work in \autoref{setup}.} Suppose that the restriction of torsors pointed map $
\varrho^1_{X,U}(G) \mathrel{:} \check{H}^1(X_{\textnormal{fl}}, G) \to \check{H}^1(U_{\textnormal{fl}}, G) $ is not surjective for some finite algebraic group $G/\kay$. This means that there is a $G$-torsor $V \to U$ that cannot be extended to a $G$-torsor over $X$. Our first task is to convert this into a local algebra setting so that \autoref{thm.TransformationRule} can be applied. 

\subsection{Local finite torsors}\label{TheQuestions}
In this section, we explain why, if $\varrho^1_{X,U}(G)$ is not surjective, there exist a finite algebraic group $G'/\kay$ with $(G')^{\circ} = G^{\circ}$ and a local finite $G'$-quotient $(R, \mmm, \kay, K) \subset (S, \nnn, \kay, L)$ that is a $G'$-torsor over $U$ but not everywhere and $S$ satisfies $\mathbf{S}_2$. This is done by taking integral closures. Let $h\mathrel{:} V\to U$ be a finite morphism. Its \emph{integral closure} is the finite morphism $\tilde{h}\mathrel{:}Y \to X$ where $Y= \Spec S$ and $S\coloneqq H^0(\sO_V,V)$. Taking integral closures is functorial on finite $U$-schemes and the pullback of $\tilde{h}$ to $U$ recovers $h$. Also, if $h:V \to U$ is the restriction of a $G$-torsor $Y \to X$, then $Y \to X$ must be the integral closure of $h$.

\begin{lemma} \label{prop.extensionOfTheAction}
Let $h\mathrel{:} V\to U$ be a finite $G$-torsor. Then, the action $a\mathrel{:} V \times G \to V$ extends to a unique action $\tilde{a}\mathrel{:} Y \times G \to Y$ such that $\tilde{h}\mathrel{:}Y \to X$ is its finite quotient morphism. Moreover, $S=H^0(Y, \sO_Y)$ is an $\mathbf{S}_2$ semi-local ring. 
\end{lemma}

\begin{proof}
We show that the coaction of $a$ on global sections gives $\tilde{a}$. Note that $R=H^0(U,\sO_U)$ as $R$ is $\mathbf{S}_2$. Further, $S=H^0(V,\sO_V)$ is an $\mathbf{S}_2$ ring too. Indeed, since $h\mathrel{:}V \to U$ is a faithfully flat finite morphism, $h_* \sO_V$ is an $\mathbf{S}_2$ coherent $\sO_U$-module. Then, $i_* h_* \sO_V$ is an $\mathbf{S}_2$ coherent $\sO_X$-module by \cite[Theorem 1.12]{HartshorneGeneralizedDivisorsOnGorensteinSchemes}, where $i \mathrel{:} U \to X$ is the immersion.  However, $H^0(X, i_* h_* \sO_V)=S$ by definition. Since $S$ is an $\mathbf{S}_2$ $R$-module, it is an $\mathbf{S}_2$ ring (finite restriction of scalars does not change depth). Likewise, $S \otimes \sO(G)$ is also an $\mathbf{S}_2$ ring. Since $V \times G \subset Y \times G$ is a big open, $H^0(V\times G,\sO_{V\times G}) = S \otimes \sO(G)$. Thus, the coaction of $a$ on global sections induces a coaction $a(V)\mathrel{:} S \to S \otimes \sO(G)$, which defines the desired action $\Tilde{a}$. Furthermore, the corresponding ring of invariants is $H^0(V,\sO_V)^{G}=(q_* \sO_V)^{G}(U)  = \sO_U(U) = R$.
\end{proof}

\begin{remark} Let $R=S^{G} \subset S$ be a $G$-quasitorsor. As in \autoref{cor.OpenNatureTorsoness}, let $W \subset X$ be the big open over which $Y/X$ is a torsor. Since $R$ is $\mathbf{S}_2$, then $R=H^0(W,\sO_W)$. However,  $S \to H^0(Y_W, \sO_{Y_W})$ may not be an isomorphism unless $S$ was $\mathbf{S}_2$ to start with.  
\end{remark}

In \autoref{prop.extensionOfTheAction}, we say that $h$ \emph{extends across its integral closure} if $\tilde{h}$ is a $G$-torsor. Our problem then reduces to study the extent to which finite $G$-torsors $h \mathrel{:} V \to U$ extend across their integral closure. Next, we may assume $V$ connected and so $S$ local (as $R$ is henselian):
\begin{lemma} \label{lem.dominatingbyconnected}
Let $h\mathrel{:}V\to U$ be a finite $G$-torsor. There exist a $G'$-torsor $h'\mathrel{:}V'\to U$ and an equivariant finite $U$-morphism $f\mathrel{:} V' \to V$ such that $V'$ is connected and $(G')^{\circ} \cong G^{\circ}$.\footnote{Equivariant means the existence of a homomorphism $\varphi \mathrel{:} G' \to G$ so that $a \circ (f \times \varphi) = f \circ a'$, where $a$, $a'$ are the corresponding actions.}
\end{lemma}
\begin{proof}
 Consider $G=G^{\circ} \rtimes \pi_0(G)$. The image of $h$ under
$\check{H}^1(U_{\textnormal{fl}},G) \to
\check{H}^1\bigr(U_{\textnormal{fl}},\pi_0(G)\bigl)$ 
gives a $\pi_0(G)$-torsor $\bar{h}\mathrel{:} W \to U$ and
a $G^{\circ}$-torsor $V \to W$ factoring 
$h \mathrel{:} V \to W \to U$; see
\autoref{rem:GaloisCorrespondence}. Let $W'$ be the Galois closure of
any of the connected components of $W$; see \cite[Lemma
4.4.1.8]{MurreLecturesFundamentalGroups}. In particular, $W' \to U$ is
a finite Galois cover, \ie an \'etale connected
$\textnormal{Gal}(W'/U)$-torsor.  Set $V' \coloneqq V \times_W W'$. By
\cite[II, Lemma 1]{NoriFundamentalGroupScheme} and \cite[Proposition
2.2]{EsnaultViehwegSurfaceSingularitiesDominatedSmoothVarieties}, 
$V' \to U$ is a $G'$-torsor, where
$G\coloneqq G \times_{\pi_0(G)} \textnormal{Gal}(W'/U)$,
as $X$ is integral and $x \in X(\kay)$.\footnote{By
  \cite{NoriFundamentalGroupScheme}, if $V_i \to U$ are finite
  $G_i$-torsors ($i=0,1,2$) and $f_i \mathrel{:} V_i \to V_0$
  ($i=1,2$) are equivariant maps, then $V_1 \times_{V_0} V_2$ is a
  $G_1 \times_{G_0} G_2$-torsor over $U$ provided that $U$ is
  integral and $U(\kay) \neq \emptyset$. In our case, $U$ is integral
  but $U(\kay) = \emptyset$. This is remedied in
  \cite{EsnaultViehwegSurfaceSingularitiesDominatedSmoothVarieties}
  by using that $U \subset X$ and $X(\kay) \neq \emptyset$.} Note
that $(G')^{\circ} \cong G^{\circ}$ as $G=G^{\circ} \times \pi_0(G)$
as schemes.  It remains to explain why $V'$ is connected.  
\begin{claim}
  \label{ConnectednessForTorsors}
  If $X_2 \to X_1$ is a finite $G_0$-torsor with $X_1$ and $G_0$
  connected, then $X_2$ is connected. 
\end{claim}
\begin{proof}[Proof of the claim] 
  The following argument was kindly provided to the author by one of the anonymous referees. It is well-known that $X_2 \to X_1$ is a (universal) geometric quotient; see \cite[Theorem 4.16]{AbelianVarietiesGeerMooen} for details. Further, since $G_0$ is homeomorphic to a point, it follows that $X_2 \to X_1$ is a homeomorphism. The claim then follows.
\end{proof}
The lemma follows by noting that $V' \to W'$ is a $G^{\circ}$-torsor.
\end{proof}

In \autoref{lem.dominatingbyconnected}, if $h'\mathrel{:} V' \to U$ extends across its integral closure then so does $h\mathrel{:}V \to U$; see \cite[Proposition 2.3]{EsnaultViehwegSurfaceSingularitiesDominatedSmoothVarieties}. Summing up:

\begin{proposition}\label{pro.TechnicalProposition}
Let $h \mathrel{:} V \to U$ be a $G$-torsor that is not the restriction of a $G$-torsor over $X$. Then, there exists a local finite $G'$-quotient $Y=\Spec (S,\nnn, \kay) \to X$ that restricts to a torsor over $U$ but not over $X$. Moreover, $S$ is an $\mathbf{S}_2$ ring and $(G')^{\circ}\cong G^{\circ}$.
\end{proposition}
\begin{remark} The residue fields are the same because $\kay$ is algebraically closed. This is to ensure that every cover is endowed with a $\kay$-rational point lying over $x$. 
\end{remark}
\begin{remark} \label{AlternateReduction} 
Using a slightly different argument,  we can obtain the weakening of \autoref{lem.dominatingbyconnected} (and so of \autoref{pro.TechnicalProposition}) where $(G')^{\circ}$ is either trivial or isomorphic to $G^{\circ}$.  Let $\bar{h}$ be as in the proof of \autoref{lem.dominatingbyconnected}.  If $\bar{h}$ is not trivial, the result follows using \cite[Lemma 4.4.1.8]{MurreLecturesFundamentalGroups} (see \cite[\S2.4]{CarvajalSchwedeTuckerEtaleFundFsignature}), in which case $(G')^{\circ}=*$. If $\bar{h}$ is trivial, by exactness of $\check{H}^1(U_{\textnormal{fl}},G^{\circ}) \to \check{H}^1(U_{\textnormal{fl}},G) \to \check{H}^1\bigr(U_{\textnormal{fl}},\pi_0(G)\bigl)$, 
$h$ is isomorphic to $V' \wedge^{G} G^{\circ} \to U$ for some $G^{\circ}$-torsor $V' \to U$, which is nontrivial as so is $h$.  By \autoref{ConnectednessForTorsors}, $V'$ is connected.   Further,  the canonical $U$-morphism $V' \to V' \wedge^{G} G^{\circ}$ is equivariant; see \cite[III, 1.3.6.(i)]{GiraudNonabelianCohomology}. 
\end{remark}

\subsection{The generalized transformation rule for the $F$-signature} By the transformation rule in \cite{CarvajalSchwedeTuckerEtaleFundFsignature}, the size of $s(R) \in [0,1]$---the $F$-signature of $R$---imposes constraints on the existence and size of local extensions $R \subset S'$ in \autoref{pro.TechnicalProposition} if $G'/\kay$ is \'etale. We abstract here the properties we require for this transformation rule to exist in further generality. 

\begin{theorem}\label{thm.TransformationRule}
Let $(A, \aaa) \subset (B, \bbb)$ be a finite local extension. Suppose that there exists $T \in \Hom_A(B,A)$ such that: $T$ is a free generator of $\Hom_A(B,A)$ as a $B$-module, $T$ is surjective, and $T(\bbb) \subset \aaa$. If $A$ is not a domain, then $s(A)=0=s(B)$. Else, the formula 
\[
\left[\kappa(\bbb):\kappa(\aaa)\right]  \cdot s(B)= \dim_{K(A)} {B_{K(A)}} \cdot s(A).\]
holds. In particular, $B$ is a strongly $F$-regular domain if (and only if) so is $A$.
\end{theorem}
\begin{proof}
If $A$ is not a domain, then neither is $B$. Hence, both of them fail to be strongly $F$-regular as they are local and strongly $F$-regular algebras are products of strongly $F$-regular domains; see \cite{HochsterHunekeTightClosureAndStrongFRegularity}. Therefore $s(A)=0=s(B)$ if $A$ is not a domain. 

Assuming $A$ is a domain, set $q\mathrel{:} \Spec B \to \Spec A$ and 
\[
\delta\coloneqq \dim A + \log_p\left[\kappa(\aaa)^{1/p}:\kappa(\aaa)\right] = \dim B + \log_p\left[\kappa(\bbb)^{1/p}:\kappa(\bbb)\right].\] 
Then,
\begin{align*}
  \left[\kappa(\bbb):\kappa(\aaa)\right]  \cdot s(B)
  &= \left[\kappa(\bbb):\kappa(\aaa)\right] \cdot
    \lim_{e \rightarrow \infty}{\frac{1}{p^{e \delta }}
    \lambda_B \big(\Hom_B(F^e_* B, B)\bigl/\Hom_B(F^e_* B, \bbb)\big) }\\ 
  &= \lim_{e \rightarrow \infty}{\frac{1}{p^{e \delta }}
    \lambda_A \big(
    q_*\Hom_B(F^e_* B, B)\bigl/q_*\Hom_B(F^e_* B, \bbb)
    \big)}\eqqcolon  \mathfrak{C},
\end{align*}
where $\lambda_{-}$ is used to denote lengths. 

Our first hypothesis is $q^! A = \Hom_A(B,A) = B \cdot T$. Then, by Grothendieck duality for $q$, composing-with-$T$ yields an $A$-isomorphism $\tau\mathrel{:}q_*\Hom_B(F^e_* B, B) \to \Hom_A(q_* F^e_* B, A)$ under which $\tau( q_*\Hom_B(F^e_* B, \bbb) ) = \Hom_A(q_*F^e_* B, \aaa)$. Indeed, the inclusion ``$\subset$'' follows at once from $T(\bbb) \subset \aaa$. The converse containment; rather its contrapositive, follows from the surjectivity of $T$, for $\tau(\varphi)=T \circ \varphi$ is surjective if so is $\varphi$. Our analysis resumes as follows
\begin{align*}
  \mathfrak{C}
  &= \lim_{e \rightarrow \infty}{\frac{1}{p^{e \delta }}
    \lambda_A \big(\Hom_A(q_*F^e_* B, A)\bigl/\Hom_A(q_*F^e_* B, \aaa)\big)}\\ 
  &= \lim_{e \rightarrow \infty}{\frac{1}{p^{e \delta }}
    \lambda_A \big(\Hom_A(F^e_* q_* B, A)\bigl/\Hom_A(F^e_* q_*B, \aaa)\big)}\\
  &= \dim_{K(A)} {B_{K(A)}} \cdot s(A).
\end{align*}
See \cite[Theorem 4.11]{TuckerFSigExists} for the last equality (\cf \cite[Proposition 3.5, Lemma 3.6]{BlickleSchwedeTuckerFSigPairs1}).
\end{proof}

The following result is implicit in the proof of \autoref{thm.TransformationRule}.
\begin{scholium} \label{sch.Scholium2}
Work in the setup of \autoref{thm.TransformationRule}. $A$ is $F$-pure $($if and$\,)$ only if so is $B$.
\end{scholium}
\begin{proof}
The ``$\Leftarrow$'' direction is well-known; see \cite[Proposition
1.10]{SmithZhang}. Conversely, for every $\varphi \in \Hom_A(F^e_*
A,A)$ there exists a unique  $\psi \in \Hom_B(F^e_* B,B)$ such that $T
\circ \psi = \varphi \circ F^e_* T$.  Since $T(\bbb) \subset \aaa$ and
$F_*^eT$ is surjective, $\psi$ is surjective if so is $\varphi$. 
\end{proof}

In order to apply \autoref{thm.TransformationRule} to a finite
$G$-quasitorsor $(R, \mmm, \kay, K) \subset (S, \nnn, \kay, L)$, we must
check that $\Tr_{S/R}$ satisfies the three hypotheses in \autoref{thm.TransformationRule}. The first one follows from \autoref{cor.Tr_generates} and $S$ being $\mathbf{S}_2$ in \autoref{pro.TechnicalProposition}. The second one follows from $R$ being a splinter; see \autoref{pro:SplinterImpliesTame}. The third hypothesis, however, will occupy us for the rest of this section. In case it holds, $s(S)=o(G) \cdot s(R)$. It is satisfied in the \'etale case;
see \cite[Lemma 2.10]{CarvajalSchwedeTuckerEtaleFundFsignature}, \cite[Lemma 9]{SpeyerFrobeniusSplit}. The following example warns us to be careful in general.

\begin{example} \label{ex.ExampleBadTrace}
Let $S=R[t]/( t^p-r )$ and consider an arbitrary field $\kay$. Note that $S$ is local for all $r$. However, what its maximal ideal $\nnn$ is depends on $r$. Let $y$ be the closed point of $\Spec S$ and $x$ be the one of $\Spec R$. If $r \in \mmm$, then $\nnn = \mmm \oplus R\cdot t \oplus \cdots \oplus R\cdot t^{p-1}$ and so $y$ is a $\kay$-rational point lying over $x$. If $r \notin
\mmm$, there are two cases depending on whether or not $r$ has a $p$-th root residually. If $r=u^p+z$ for some $u \in R^{\times}$ and $z\in \mmm$, then $\nnn=\mmm S + (t-u)$, so $y$ is a $\kay$-rational point too.  If $r$ has no $p$-th roots even residually, then $\nnn = \mmm S$. However, that would be impossible if we demand $y$ to be a $\kay$-rational point as we have $\kay \subsetneq \kay(r^{1/p})$ at the residue fields level.

Now, $(R, \mmm)\subset (S,\nnn)$ is an $\alpha_p$-torsor for all $r \in R$ via the coaction $\alpha\mathrel{:}t \mapsto t \otimes 1 + 1 \otimes \xi$. If $r\in \mmm$,  $t^{p-1} \in \nnn$ as we saw above. However,
\begin{align*}
  \Tr_{S/R}(t^{p-1})
  &= \big(\id \otimes \Tr_{\alpha_p}\big) \big(\alpha(t^{p-1})\big)\\
&=\big(\id \otimes \Tr_{\alpha_p}\big) \big( (t\otimes 1 + 1\otimes \xi)^{p-1}\big)\\
&=\big(\id \otimes \Tr_{\alpha_p}\big) \Bigg( \sum_{i=0}^{p-1}{\binom{p-1}{i}t^{p-1-i}\otimes \xi^{i}}\Bigg)\\
&= \sum_{i=0}^{p-1}{\binom{p-1}{i}t^{p-1-i}\Tr_{\alpha_p} (\xi^i)}=1,
\end{align*}
see \autoref{ex.ExampleTraces}. Hence, $\Tr_{S/R}(\nnn) \not\subset \mmm$. This can also happen even for $\mu_p$-torsors. Indeed, if $r$ is a unit, then $(R, \mmm)\subset (S,\nnn)$ is a $\mu_p$-torsor under the coaction $t \mapsto t \otimes \zeta$. If $r=u^p+z$ as above, then $t-u \in \nnn$ but $\Tr_{S/R}(t-u)=u$ by a similar computation to the one before. Interestingly, if $r$ has no $p$-th roots even residually, $\Tr_{S/R}(\nnn) \subset \mmm$ and the transformation rule takes the form $p \cdot s(S) = p\cdot s(R)$, so $s(S)=s(R)$. In view of this, one may ask whether, if $r=u^p+z$ with $z \neq 0$, there is any chance that $s(S)\geq s(R)$. The following example discards this. Take $p=3$, $R=\kay\bigl\llbracket s,z^3 \bigr\rrbracket$, and $r=1+z^6$. Then, $S = \kay \bigl\llbracket s , z^2, z^3 \bigr\rrbracket$, which is not even normal. The extra variable ``$s$'' is for the sake of having a $2$-dimensional example.
\end{example}

In \autoref{ex.ExampleBadTrace}, $R\subset S$ was a torsor everywhere. This motivates the following question.

\begin{question} \label{que.Question} Work in \autoref{setup}. Let $(R, \mmm, \kay,K) \subset (S, \nnn, \kay, L)$ be a finite $G$-quotient that restricts to a $G$-torsor over $U$ but not everywhere. Is $\Tr_{S/R}(\nnn) $ contained in $\mmm$? 
\end{question}

We investigate \autoref{que.Question} for unipotent and linearly reductive group-schemes separately.

\subsection{The unipotent case} We show that \autoref{que.Question} is empty if $G$ is unipotent. 

\begin{theorem}\label{thm.UnipotentTorsors}
Work in \autoref{setup}. The restriction map $\varrho^1_{X,U}(G) \mathrel{:} \check{H}^1(X_{\textnormal{fl}}, G) \to \check{H}^1(U_{\textnormal{fl}}, G)$ is surjective for all unipotent finite algebraic groups $G/\kay$.
\end{theorem}

We will provide two proofs. The first one is an application of the work in \cite{ChinburgErezPappasTaylorTameActions} and hence shorter looking. The second proof was our original approach and is quite direct. We consider the techniques involved in our proof to be quite valuable and interesting in their own right. In fact, we reuse them in the proof of our main theorem (\autoref{thm.MainTheorem}).

\begin{proof}[First proof of \autoref{thm.UnipotentTorsors}]
According to \cite[Proposition 6.2]{ChinburgErezPappasTaylorTameActions}, if $R \subset S$ is a tame $G$-quotient by a unipotent finite algebraic group $G/\kay$, then $R \subset S$ must be a $G$-torsor. Therefore, the result follows from \autoref{pro.TechnicalProposition} and \autoref{pro:SplinterImpliesTame}. 
\end{proof}

The second proof relies on unipotent group-schemes admitting a central normal series whose intermediate quotients are (isomorphic to) subgroups of $\mathbb{G}_a$ and in particular commutative; see \cite[14.21]{MilneAlgebraicGroups}. In view of this, we will show \autoref{thm.UnipotentTorsors} first in the commutative case and obtain the general case by doing induction on the order.

\subsubsection{The commutative unipotent case} \label{ElementaryCase} If $G$ is commutative and $I=\mmm$ (\ie $U$ is the punctured spectrum),  we have the following exact sequence from \cite[III, Corollaire 4.9]{BoutotSchemaPicardLocal}:
\begin{equation} \label{eqn.BuototSES}
0 \to H^1(X_{\textnormal{fl}},G) \xrightarrow{\rho_{X,U}^1(G)} H^1(U_{\textnormal{fl}},G) \to \Hom(G^{\vee}, \textnormal{Pic}_{R/\kay}(U))\to 0.
\end{equation}
Recall that, since $G$ is commutative, the group $H^i(X_{\textnormal{fl}},G)$ (defined via derived-functor cohomology of sheaves of abelian groups) is isomorphic to the group $\check{H}^i(X_{\textnormal{fl}},G)$ (defined via torsors and gerbes); see \autoref{def.torsors}. Thus, every commutative $G$-torsor over $U$ extends to a $G$-torsor
over $X$ if and only if $\Hom(G^{\vee}, \textnormal{Pic}_{R/\kay}(U))$
is trivial. This could be used to simplify our forthcoming
arguments. However, to the best of the author's knowledge, it is
unknown whether Boutot's theory of the local Picard scheme and
\autoref{eqn.BuototSES} extend to general $I$ of height $\geq 2$. This
is rather limiting for us. First, we are interested in obtaining
potential global results like those in
\cite{BhattCarvajalRojasGrafSchwedeTucker}. Second, the case $I=\mmm$
is most interesting for surfaces singularities yet we are interested
in higher dimensions. We take a closer look at Boutot's arguments in
\cite[III]{BoutotSchemaPicardLocal} to see what information on the
cokernel of $\rho^1_{X,U}(G)$ we can still get. We introduce the
following notation
\begin{equation} \label{CokernelObstruction}
\Ob_{X,U}(G) \coloneqq \coker ( \rho^1_{X,U}(G)\mathrel{:}H^1(X_{\textnormal{fl}}, G) \to H^1(U_{\textnormal{fl}}, G) )
\end{equation}

\begin{lemma}\label{lem.KeyLemma}
Let $\ast \to G' \to G \to G'' \to \ast$ be a short exact sequence of commutative affine group-schemes over $\kay$ with $\Ob_{X,U}(G')=0=\Ob_{X,U}(G'')$. Then, $\Ob_{X,U}(G)=0$.
\end{lemma}
 \begin{proof}
Recall that $\rho_{X,U}^i(-)\mathrel{:}H^i(X_{\textrm{ft}},-) \to H ^i(U_{\textrm{ft}},-)$ is obtained as the left-derived natural transformation of $\Gamma(X,-)\to \Gamma(U,-)$ and so it is compatible with the $\delta$-structures. We then have the following commutative and horizontally-exact diagram:
\[
 \xymatrix{
H^0(X_{\textnormal{fl}},G'') \ar[r]^-{\delta} \ar[d] & H^1(X_{\textnormal{fl}},G') \ar[r] \ar[d] & H^1(X_{\textnormal{fl}},G) \ar[r] \ar[d]& H^1(X_{\textnormal{fl}},G'') \ar[r]^-{\delta} \ar[d] & H^2(X_{\textnormal{fl}},G') \ar[d] \\
H^0(U_{\textnormal{fl}},G'') \ar[r]^-{\delta}  & H^1(U_{\textnormal{fl}},G') \ar[r] & H^1(U_{\textnormal{fl}},G) \ar[r]& H^1(U_{\textnormal{fl}},G'') \ar[r]^-{\delta} & H^2(U_{\textnormal{fl}},G') 
}
\] 
Our hypothesis is that second and fourth vertical arrows are surjective. Hence, according to the $5$-lemma, to get surjectivity of the third one, we need the fifth arrow to be injective. However, Boutot shows in \cite[III, Corollaire 4.9]{BoutotSchemaPicardLocal} that $H^2(X_{\textnormal{fl}},G')=0$ for all commutative $G'$. In fact, all cohomologies higher than $1$ vanish; see \cite[Proposition 3.1]{BhattAnnihilatingCohomologyGroupSchemes}.
\end{proof} 
 
The following proposition demonstrates \autoref{thm.UnipotentTorsors} in the commutative case.

\begin{proposition} 
If $G$ is a commutative unipotent finite algebraic group then $\Ob_{X,U}(G)=0$.
\end{proposition}
\begin{proof} 
By \autoref{lem.KeyLemma}, it suffices to treat the simple case,  \textit{i.e.}  $G=\Z/p$ and $G=\alpha_p$.

\begin{claim} \label{cla.Z/pZTorsors} $\Ob_{X,U}(\Z/p)=0$.
\end{claim}
\begin{proof}[Proof of the claim]
This is a consequence of Artin--Schreier theory; see \cite[III, Proposition 4.12]{MilneEtaleCohomology}. From the long exact sequence on flat cohomology derived from \autoref{eqn.ArtinSchreierSES}, we have
\[
\Ob_{X,U}(\Z/p) \cong H^1(U, \sO_U)^F\coloneqq\bigl\{a\in H^1(U, \sO_U)=H^2_{I}(R) \mid Fa =a\bigr\}.\]
This vanishes if the ring $R$ is just $F$-rational and $I=\mmm$ by \cite[\S 2, Theorem 2.6]{SmithFRatImpliesRat}. One proves $H^1(U, \sO_U)^F=0$ in our setting as follows. Take $a\in H^1(U, \sO_U)^F$ and let $r$ be a nonzero element in the annihilator of $a$ (recall that every element of $H_I^i(R)$ is annihilated by some power of $I$). Let $\varphi \in \Hom_R(F^e_* R, R)$ be a splitting of the $R$-linear composition $R \to F^e_* R \xrightarrow{\cdot r} F^e_* R$. By applying $H_I^2(-)$, we get that $\phi\coloneqq H_I^2(\varphi)$ splits the composition $H_I^2(R) \xrightarrow{F^e} H_I^2(R) \xrightarrow{\cdot r} H_I^2(R)$, whence $a= \phi ( r \cdot F^e a ) = \phi(r \cdot a)=\phi (0) =0 $. 
\end{proof}
\begin{claim} \label{cla.AlphapTorsors}  $\Ob_{X,U}(\alpha_p)=0$.
\end{claim}
\begin{proof}[Proof of the claim] From the long exact sequence on flat cohomology derived from \autoref{eqn.InfinitesimalSES}, we get
\[
\Ob_{X,U}(\alpha_p)\cong \ker\left( H^1(U, \sO_U) \xrightarrow{F} H^1(U,\sO_U) \right).
\]
This kernel is zero, by definition, for an $F$-injective $X$ and $I=\mmm$. For general $I$, one can use $F$-purity to show $H_I^2 (R) \xrightarrow{F} H_I^2 (R) $ is injective. Indeed, if $\varphi$ splits $R \to F^e_*R$, then $H_I^2 (\varphi)$ splits $H_I^2 (R) \xrightarrow{F} H_I^2 (R) $, forcing it to be injective. This proves the claim.
\end{proof}
The proposition then holds.
\end{proof}

\begin{remark}
\autoref{cla.Z/pZTorsors} follows for $R$ a splinter from \cite[Corollary 2.11]{CarvajalSchwedeTuckerEtaleFundFsignature}.
\end{remark}
\subsubsection*{Acknowledgement}{The author would like to thank Christian Liedtke who first observed the results of \autoref{cla.Z/pZTorsors} and \autoref{cla.AlphapTorsors}} and made him aware of them. These results will be treated in an upcoming work by Christian Liedtke and Gebhard Martin \cite{LiedtkeMartin}.

\subsubsection{The general unipotent case} \label{GeneralUnipotentCase} 
To handle the general case, we proceed by induction on the order of $G$ along with the fact it admits a central commutative (necessarily) unipotent subgroup whose quotient is (necessarily) unipotent \cite[14.21]{MilneAlgebraicGroups}. However, we shall require the use of nonabelian first and second flat cohomology as treated in \cite{GiraudNonabelianCohomology,DebremaekerNonAbelianCohomology}.

\begin{proof}[Second proof of \autoref{thm.UnipotentTorsors}]
Let $G'$ be a nontrivial central (so normal and commutative) subgroup
of $G$ with corresponding short exact sequence 
$*\to G' \to G \to G'' \to *$ so that $G''$ is unipotent and 
$o(G'')< o(G)$. We may assume $o(G')<o(G)$ as otherwise we are done by
\autoref{ElementaryCase}. Consider now the commutative diagram
\begin{equation} \label{eqn.CommutativeDiagram}
  \xymatrix{
    H^0(X_{\textnormal{fl}},G'') \ar[r]^-{} &
    \check{H}^1(X_{\textnormal{fl}},G') \ar[r] \ar[d] &
    \check{H}^1(X_{\textnormal{fl}},G) \ar[r] \ar[d]&
    \check{H}^1(X_{\textnormal{fl}},G'') \ar[r]^-{} \ar[d] &
    \check{H}^2(X_{\textnormal{fl}},G') \\
    H^0(U_{\textnormal{fl}},G'') \ar[r]^-{}  &
    \check{H}^1(U_{\textnormal{fl}},G') \ar[r] &
    \check{H}^1(U_{\textnormal{fl}},G) \ar[r]&
    \check{H}^1(U_{\textnormal{fl}},G'') \ar[r]^-{} &
    \check{H}^2(U_{\textnormal{fl}},G')  
  }
\end{equation} 
where the horizontal sequences are exact sequences of pointed sets. As
before,
$\check{H}^2(X_{\textnormal{fl}},G')=H^2(X_{\textnormal{fl}},G')=0$,
for $G'$ is commutative \cite[III, Corollaire
4.9]{BoutotSchemaPicardLocal}.  The first and third vertical arrows
are surjective by the inductive hypothesis. Unfortunately, we cannot
apply the $5$-lemma to get the surjectivity of the middle one as the
sets in consideration are no longer groups. Let us chase the diagram
to see how to get around this. Let 
$t_0 \in \check{H}^1(U_{\textnormal{fl}},G)$. It maps to 
$t_1 \in \check{H}^1(U_{\textnormal{fl}},G'')$ which extends to 
$t_2 \in \check{H}^1(X_{\textnormal{fl}},G'')$. However, $t_2$ lifts
to $t_3 \in \check{H}^1(X_{\textnormal{fl}},G)$ as
$\check{H}^2(X_{\textnormal{fl}},G')$ is trivial. Let 
$t_4 \in \check{H}^1(U_{\textnormal{fl}},G)$ be the restriction of
$t_3$ to $U$. At this point, we wish to substract $t_4$ from $t_0$ as
$t_4 \mapsto t_1$. However, such operation does not make sense in this
setting. Fortunately, we can make sense of this by \emph{changing the
origin via twisted forms}; see \cite[III,
\S2.6]{GiraudNonabelianCohomology}. Indeed, consider the conjugate
representation $G \to \Aut G$ of $G$ being defined by the action of
$G$ on itself by inner automorphisms. This gives a map of pointed sets
$\check{H}^1(U_{\textnormal{fl}}, G) \to
\check{H}^1(U_{\textnormal{fl}}, \Aut G)$, 
where $\check{H}^1(U_{\textnormal{fl}}, \Aut G)$ classifies the
so-called \emph{twisted forms} of $G$ \cite[III,
\S2.5]{GiraudNonabelianCohomology}. Let us write $t \mapsto {^t}G$ for
the map realizing $G$-torsors as twisted forms of $G$. Moreover, we
have a bijection: 
$\theta_t: \check{H}^1(U_{\textnormal{fl}}, G) \to
\check{H}^1(U_{\textnormal{fl}}, {^t}G)$ 
where $t$ is sent to the trivial class in
$\check{H}^1(U_{\textnormal{fl}}, {^t}G)$. If $G$ is commutative,
$\theta_t \mathrel{:} t' \mapsto t'-t$; see \cite[III,
2.6.3]{GiraudNonabelianCohomology}.

Thus, what we need to do is to twist $* \to G' \to G \to G'' \to *$ by
the $G$-torsor $t_4$. Indeed, $G$ acts by inner automorphism on both
$G'$ and $G''$. Then, $t_4$ can be realized as a twisted form of both
$G'$ and $G''$. Since $G'$ is central, the corresponding twisted form
is the trivial one, namely $G'$ itself. Since $t_4 \mapsto t_1$, the
twisted form of $G''$ given by $t_4$ is ${^{t_1}}G''$. Thus, we have a
short exact sequence $* \to G' \to {^{t_4}}G \to {^{t_1}}G'' \to *$
(on $U$) and a commutative diagram
\begin{equation} \label{eqn.comm}
\xymatrix{
H^1(U_{\textnormal{fl}}, G') \ar[r] &\check{H}^1(U_{\textnormal{fl}}, G) \ar[r] \ar[d]^-{\theta_{t_4}} &\check{H}^1(U_{\textnormal{fl}}, G'')\ar[d]^-{\theta_{t_1}}\\
H^1(U_{\textnormal{fl}}, G') \ar[r] &\check{H}^1(U_{\textnormal{fl}}, {^{t_4}}G) \ar[r] &\check{H}^1(U_{\textnormal{fl}}, {^{t_1}}G'')
}
\end{equation}
See \cite[III, 3.3.5]{GiraudNonabelianCohomology}. 
Likewise, we twist $* \to G' \to G \to G'' \to *$ by $t_3$ to get the
short exact sequence $* \to G' \to {^{t_3}}G \to {^{t_2}}G'' \to *$ on $X$. Further, we have the following commutative diagram 
\[
\xymatrix{
H^1(X_{\textnormal{fl}}, G') \ar[r] \ar[d] &\check{H}^1(X_{\textnormal{fl}}, {^{t_3}}G) \ar[r] \ar[d] &\check{H}^1(X_{\textnormal{fl}}, {^{t_2}}G'')\ar[d]\\
H^1(U_{\textnormal{fl}}, G') \ar[r] &\check{H}^1(U_{\textnormal{fl}}, {^{t_4}}G) \ar[r] &\check{H}^1(U_{\textnormal{fl}}, {^{t_1}}G'')
}
\]
with exact horizontal sequences. By \autoref{eqn.comm},
$\theta_{t_4}(t_0)\in \check{H}^1(U_{\textnormal{fl}}, {^{t_4}}G)$
maps to $\theta_{t_1}(t_1) \in
\check{H}^1(U_{\textnormal{fl}}, {^{t_1}}G'')$, which is the trivial
${^{t_1}}G$-torsor. Hence, there exists $t_5 \in
H^1(U_{\textnormal{fl}}, G') $ mapping to
$\theta_{t_4}(t_0)$. Nevertheless, by the inductive hypothesis, $t_5$
extends to a torsor $t_6 \in H^1(X_{\textnormal{fl}}, G')$, which maps
to a torsor $t_7 \in \check{H}^1(X_{\textnormal{fl}}, {^{t_3}}G) $. By
commutativity, $t_7$ restricts to $\theta_{t_4}(t_0)$. On the other
hand, we have the commutative square 
\[
\xymatrix{
\check{H}^1(X_{\textnormal{fl}}, G) \ar[d]  \ar[r]^-{\theta_{t_3}} &\check{H}^1(X_{\textnormal{fl}}, {^{t_3}}G)\ar[d]\\
\check{H}^1(U_{\textnormal{fl}}, G)  \ar[r]^-{\theta_{t_4}} &\check{H}^1(U_{\textnormal{fl}}, {^{t_4}}G)
}
\]
from which it follows that the unique $t_8 \in \check{H}^1(X_{\textnormal{fl}}, G)$ such that $\theta_{t_3}(t_8) = t_7$ restricts to $t_0$, \ie $\varrho^1_X(G)(t_8)=t_0$; as desired.
\end{proof}

\subsubsection*{Acknowledgement} The author is deeply thankful to Bhargav Bhatt who kindly taught him the use of twisted forms to control the fibers of a map $\check{H}^1(Y_{\textnormal{fl}},G) \to \check{H}^1(Y_{\textnormal{fl}},G/H)$.

From the above,  we extract the following useful lemma improving upon \autoref{lem.KeyLemma}.

\begin{lemma} \label{lemmaa.InductiveStep}
Work in \autoref{setup}.  Let $\ast \to G' \to G \to G'' \to \ast$ be an exact sequence of affine $\kay$-group-schemes such that $\varrho_{X,U}(G')$ and $\varrho_{X,U}(G'')$ are surjective.  If $\check{H}^1(X_{\mathrm{fl}},G) \to \check{H}^1(X_{\mathrm{fl}},G'')$ is surjective (\textit{e.g.}  $G'$ is solvable or $G= G'\rtimes G''$), then so is $\varrho_{X,U}(G)$.
\end{lemma}
\begin{proof}
We start by mentioning why $\check{H}^1(X_{\mathrm{fl}},G) \to \check{H}^1(X_{\mathrm{fl}},G'')$ is surjective if either $G'$ is solvable or $G= G'\rtimes G''$.  Consider the diagram \autoref{eqn.CommutativeDiagram}.  Note that $\check{H}^2(X_{\textnormal{fl}},G')$ is trivial if $G'$ is solvable.  Indeed,  this follows from the commutative case (\cite[Proposition 3.1]{BhattAnnihilatingCohomologyGroupSchemes}) by induction on the order and taking long exact sequences on nonabelian cohomologies. Finally, if $G\to G''$ is a split epimorphism then so is $\check{H}^1(X_{\mathrm{fl}}, G ) \to \check{H}^1(X_{\mathrm{fl}}, G'' )$; so it is surjective. 

We chase \autoref{eqn.CommutativeDiagram} and define $t_0$,  $t_1$,  $t_2$,  $t_3$, and $t_4$ as above.  Next, we twist the whole diagram by $t_3$ to obtain a commutative diagram
\[
\xymatrix{
\check{H}^1(X_{\textnormal{fl}}, {^{t_3}}G') \ar[r] \ar[d] &\check{H}^1(X_{\textnormal{fl}}, {^{t_3}}G) \ar[r] \ar[d] &\check{H}^1(X_{\textnormal{fl}}, {^{t_2}}G'')\ar[d]\\
\check{H}^1(U_{\textnormal{fl}}, {^{t_4}}G') \ar[r] &\check{H}^1(U_{\textnormal{fl}}, {^{t_4}}G) \ar[r] &\check{H}^1(U_{\textnormal{fl}}, {^{t_1}}G'')
}
\]
where $\theta_{t_4}(t_0) \in  \check{H}^1(U_{\textnormal{fl}}, {^{t_4}}G)$ is mapped to the trivial class in $\check{H}^1(U_{\textnormal{fl}}, {^{t_1}}G'')$. Therefore, $\theta_{t_4}(t_0)$ has a preimage $t_5 \in H^1(U_{\textnormal{fl}}, {^{t_4}}G')$. On the other hand, we consider the commutaive diagram
\[
\xymatrix{
\check{H}^1(X_{\textnormal{fl}}, G') \ar[d]  \ar[r]^-{\theta_{t_3}} &\check{H}^1(X_{\textnormal{fl}}, {^{t_3}}G')\ar[d]\\
\check{H}^1(U_{\textnormal{fl}}, G')  \ar[r]^-{\theta_{t_4}} &\check{H}^1(U_{\textnormal{fl}}, {^{t_4}}G')
}
\]
where the horizontal arrows are bijections. Since the left arrow is surjective, then so is the right arrow. Thus, we may consider $t_6 \in \check{H}^1(X_{\textnormal{fl}}, {^{t_3}}G') $ to be a preimage of $t_5$. Let $t_7 \in \check{H}^1(X_{\textnormal{fl}}, {^{t_3}}G)$ be the image of $t_6$. Notice that $t_7$ maps to $\theta_{t_4}(t_0)$ in $\check{H}^1(U_{\textnormal{fl}}, {^{t_4}}G)$. Therefore, the untiwst of $t_7$, say $t_8 \coloneqq \theta_{t_3}^{-1}(t_7) \in \check{H}^1(X_{\textnormal{fl}}, G)$, maps to $t_0$ under $\check{H}^1(X_{\textnormal{fl}}, G) \to \check{H}^1(U_{\textnormal{fl}},G)$. 
\end{proof}

\subsection{The linearly reductive case} We keep working in \autoref{setup}. However, most of our discussion in this section can be relaxed to $R$ not being necessarily strictly henselian. Recall that linearly reductive finite algebraic groups are extensions of \'etale groups with prime-to-$p$ order by $D(\Gamma)$ where $\Gamma$ is an abelian $p$-group. Hence, we may focus on $\mu_{p^e}$. By Kummer Theory \cite[Chapter III, \S4]{MilneEtaleCohomology}, $\mu_n$-torsors over $U$ are (up to isomorphisms) in one-to-one correspondence with pairs $(\sL, \varphi)$, where $\sL$ is an invertible sheaf on $U$ and $\varphi$ is an isomorphism $\sO_U \to \sL^{n}$. The $\mu_n$-torsor, say $V \to U$, associated to the pair $(\sL, \varphi)$ is the cyclic cover $\mathbf{Spec}_U \bigoplus_{i=0}^{n-1} \sL^{i}$. That is, $V$ is the open subscheme of $\Spec C(\sL,\varphi)$ lying over $U$ where $C(\sL,\varphi)$ is the following finite $R$-algebra; see \autoref{pro.TechnicalProposition}. As an $R$-module, $C(\sL, \varphi)$ equals $\bigoplus_{i=0}^{n-1}H^{0}(U, \sL^i)$. Its ring product is given by the $R$-linear maps:
\begin{align*}
&H^0(U, \sL^i) \otimes_R H^0(U, \sL^j) \xrightarrow{\textnormal{can.}} H^0(U, \sL^{i+j}) &\text{ if } i+j < n,\\
&H^0(U, \sL^i) \otimes_R H^0(U, \sL^j) \xrightarrow{\textnormal{can.}}  H^0(U, \sL^{i+j}) \xrightarrow{H^0(U,\phi^{-1})} H^0(U, \sL^{i+j-n}) &\text{ if } i+j\geq n.
\end{align*}
The coaction 
$C(\sL,\varphi) \to C(\sL,\varphi) \otimes \sO(\mu_n)$ is defined
by sending $f \in H^0(U,\sL^i)$ to 
$f \otimes \zeta^i$. \emph{Therefore, the trace
$\Tr_{C(\sL,\varphi)/R}$ is the projection onto the zeroth-degree
summand in the above direct sum.}  See \autoref{ex.ExampleTraces}. Let
us write $\sL = \sO_U(D)$ for some Cartier divisor $D$ on $U$. Since
$U$ is big, $D$ extends uniquely to a Weil divisor on $X$. Thus,
\[
  H^0(U,\sL^i)=H^0(U,\sO_U(iD))=R(iD)=\{f \in K^{\times} \mid
  \divisor (f) + iD\geq 0\} \cup \{0\} \subset K
.\]
Defining an isomorphism $\varphi \mathrel{:} \sO_U \to \sO_U(nD)$ amounts to give
$a\in K^{\times}$ such that $\divisor(a)+nD =0$, which implies $R(nD)
= R \cdot a \subset  K$. Hence, we may present the data of a cyclic
cover as $C(\sL,\varphi)=C(D,a;n)$. In this way, the ring product is
performed internally by the pairing 
\[\left\{\begin{array}{rcl}
R(iD)\otimes_R R(jD) & \longrightarrow & R((i+j)D)\\
f \otimes g & \longmapsto &  f\cdot g,
\end{array}\right.
\]
if $i+j<n$. In case $m\coloneqq i+j -n \geq 0$, we have:
\[
R(iD)\otimes_R R(jD)  \to  R((i+j)D) = R(mD+nD)   \xleftarrow{\cong}  R(mD) \otimes_R R(nD)  =  R(mD) \otimes_R R a  \xleftarrow{\cong}  R(mD), 
\]
where the first isomorphism ``$\xleftarrow{\cong}$'' uses that $nD$ is Cartier. Succinctly, if $i+j \geq n$, we have
\[\left\{\begin{array}{rcl}
R(iD)\otimes_R R(jD) & \longrightarrow & R((i+j-n)D) \\
f \otimes g & \longmapsto & fg \left/ a\right. .
\end{array}\right.
\]
See \cite{TomariWatanabeNormalZrGradedRings} for further details. Observe that, in Tomari--Watanabe's terms, we are considering only \emph{integral} cyclic covers. For convenience, we name condition \cite[1.7.1]{TomariWatanabeNormalZrGradedRings} as follows.

\begin{terminology} \label{ter.VeroneseKummerTypes}
$R \subset C(D,a;n)$ is a $\mu_n$-torsor if and only if $D$ is Cartier on $X$ (\textit{i.e.} trivial). In such a case, we say that the cyclic cover is of \emph{Kummer-type}. If $n$ is the index of $D$ (\textit{i.e.} condition \cite[1.7.1]{TomariWatanabeNormalZrGradedRings} holds), we say that it is of \emph{Veronese-type} in which case it is a $\mu_n$-quasitorsor. 
\end{terminology}

\begin{proposition} \label{pro.GoodTrace}
With notation as above, suppose that $R \subset C(D,a;n)$ is of Veronese-type. Then, $C=C(D,a;n)$ is a local domain with maximal ideal $\nnn_C = \mmm \oplus \bigoplus_{i=1}^{n-1}{R(iD)}$ and field of fractions $K(a^{1/n})$. In particular, \autoref{que.Question} has an affirmative answer in this case. Therefore, $\big(C,\nnn_C, \kay,K(a^{1/n})\big)$ is a strongly $F$-regular $\kay$-germ with $s(C)=n\cdot s(R)$ (or just $F$-pure if $R$ were assumed only $F$-pure).
\end{proposition}
\begin{proof}
For the first statement, see \cite[Corollary 1.9 and Lemma 2.1]{TomariWatanabeNormalZrGradedRings}. The rest follows from \autoref{sch.Scholium2}, \autoref{thm.TransformationRule}, and $C$ is strictly local by \cite[I, Corollary 4.3]{MilneEtaleCohomology}. As mentioned above, $\Tr_{C(\sL,\varphi)/R}$ is the projection onto the zeroth-degree summand in $S = \bigoplus_{i=0}^{n-1}{R(iD)}$. Since $\nnn_C = \mmm \oplus \bigoplus_{i=1}^{n-1}{R(iD)}$, it follows that $\Tr_{C(\sL,\varphi)/R}(\nnn_C) = \mmm$.
\end{proof}

\begin{remark} \label{rem.NotStrictlyHenselian}
\autoref{pro.GoodTrace} works just fine if we do not assume that $R$ is strictly henselian. In that case, $C$ would not be necessarily strictly henselian either.
\end{remark}

\begin{example} \label{ex.CanonicalCovers}
Suppose that $R$ is $\Q$-Gorenstein with canonical divisor $K_R$ of index $n$; say $\divisor a + nK_R=0$. The corresponding Veronese-type cyclic cover is called a \emph{canonical (or index-$1$) cover} of $R$. As a direct application of \autoref{pro.GoodTrace}, both strong $F$-regularity and $F$-purity transfer to canonical covers. Assuming $p \nmid n$, this was originally demonstrated by K.-i.~Watanabe \cite{WatanabeFRegularFPure} but remained open otherwise. It is worth noting that neither $F$-rationality nor KLT-ness are transferred to cyclic covers in general \cite{SinghCyclicCoversOfRational,KawamataIndex1CoversOfLogTerminalSurfaceSingularities}. 
\end{example}

Let us look at \autoref{que.Question} for cyclic covers, \textit{i.e.} $S=C$, $G = \mu_n$. We learned from \autoref{ex.ExampleBadTrace} that Kummer-type cyclic covers may be a source of problems for us whereas from \autoref{pro.GoodTrace} that Veronese-type ones are suitable for our purposes. A general cyclic cover is an intricate mixture of both types and so we cannot expect to have an affirmative answer for \autoref{que.Question} in general. At least, we have the following way out.

\begin{proposition} \label{lem.DecompositionCyclicCovers}
Work in the setup of \autoref{que.Question} assuming $G=\mu_n$. Then, there exist a nontrivial Veronese-type cyclic cover $(R,\mmm,\kay,K)\subset (S',\mathfrak{n}', \kay, L')$. In particular, $(S',\mathfrak{n}', \kay, L')$ is a strongly $F$-regular $\kay$-germ with $F$-signature at least $2 \cdot s(R)$. 
\end{proposition}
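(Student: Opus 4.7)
The plan is to extract a nontrivial Veronese-type cyclic cover directly from the class-group information encoded in the $\bm{\mu}_n$-cover $R\subset S$, sidestepping the Kummer-type factors that caused trouble in \autoref{ex.ExampleBadTrace}.

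I would begin by invoking the Kummer-theoretic description recalled above: $S$ must be of the form $C(D;a,n)=\bigoplus_{i=0}^{n-1}R(iD)$ for some Weil divisor $D$ on $X$ and some $a\in K^{\times}$ with $\divisor(a)+nD=0$. The cover is a $\bm{\mu}_n$-torsor precisely over the locus where $D$ is Cartier, so the hypothesis that $R\subset S$ is a torsor over $U$ but not everywhere translates into the statement that $D$ is Cartier on $U$ but not on $X$. Letting $m$ denote the index of $D$, i.e., the order of $[D]\in\Cl R$, one therefore has $m\geq 2$ (because $D$ is not Cartier on $X$) together with $m\mid n$ (because $nD$ is already principal).

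Next, since $mD$ is Cartier and $R$ is local, I pick $b\in K^{\times}$ with $\divisor(b)+mD=0$ and set
\[
S'\coloneqq C(D;b,m)=\bigoplus_{i=0}^{m-1}R(iD).
\]
Because $m$ equals the index of $D$ by construction, \autoref{pro.GoodTrace} applies verbatim and yields that $(S',\fran')$ is local with $\fran'=\fram\oplus\bigoplus_{i=1}^{m-1}R(iD)$, is strongly $F$-regular, and satisfies $s(S')=m\cdot s(R)\geq 2\cdot s(R)$. The residue field of the finite local extension $R\subset S'$ is algebraic over the algebraically closed field $\kay$ and hence equals $\kay$; moreover $S'$ is strictly Henselian by \autoref{rem.LocalExtensionsAreHenselian}, so $(S',\fran',\kay,L')$ is a $\kay$-rational germ, and by construction $R\subset S'$ is Veronese-type of index $m\geq 2$, hence nontrivial. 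The only step of substance is the inequality $m\geq 2$, which is the direct translation of the failure-of-extension hypothesis into a divisor-theoretic statement; I do not anticipate any serious obstacle in carrying out the plan, as everything else reduces to the already-established \autoref{pro.GoodTrace}.
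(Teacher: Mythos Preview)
Your proposal is correct and follows essentially the same approach as the paper: extract the divisor $D$ underlying the $\bm{\mu}_n$-cover, note that the failure-to-extend hypothesis forces its index $m$ to be at least $2$, and then take $S'=C(D;b,m)$ so that \autoref{pro.GoodTrace} applies. The paper's proof is terser but identical in content; you have simply filled in the details (the choice of $b$, the verification that $m\mid n$, and the residue-field remark) that the paper leaves implicit.
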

\begin{proof}
Write $S\cong C(D,a;n)$ as above. By hypothesis, the index of $D$ is not $1$. Thus, we may take $S'$ to be $C(D,b,m)$, where $m\neq 1$ is the index of $D$ and we choose $b \in K^{\times}$ so that $\divisor b + m D = 0$. For the last statement, use \autoref{pro.GoodTrace}.
\end{proof}

\subsection{Main results} We are ready to summarize with our main results. We commence by answering to \autoref{que.Question} when $G^{\circ}$ is trigonalizable.

\begin{theorem} \label{thm.MainTheorem}
Work in \autoref{setup}. Suppose that $\varrho_{X,U}^1(G)$ is not surjective and $G^{\circ}$ is trigonalizable. Then,  there exist a nontrivial finite linearly reductive group-scheme $G'/\kay$ and a $G'$-quotient $(R,\mmm,\kay,K) \subset (S',\nnn',\kay, L')$ restricting to a $G'$-torsor over $U$ but not everywhere such that the answer to \autoref{que.Question} is affirmative in this case.  In particular, $(S',\nnn',\kay,L')$ is a strongly $F$-regular $\kay$-germ with $s(S') = o(G')\cdot s(R)$ and so $o(G')\leq 1/s(R)$.
\end{theorem}
\begin{proof}
Using $G=G^{\circ} \rtimes \pi_0(G)$ as in \autoref{AlternateReduction} and \cite[Corollary 2.11]{CarvajalSchwedeTuckerEtaleFundFsignature}, we may assume that $G/\kay$ is infinitesimal  and further trigonalizable by hypothesis.  Thus, we may write $G = G_{\mathrm{u}} \rtimes G_{\mathrm{mt}}$ as in \autoref{sec.TrigonalizableGroups}.  By \autoref{lemmaa.InductiveStep} and \autoref{thm.UnipotentTorsors}, we may assume $G = G_{\mathrm{mt}} \cong \bigoplus_i \mu_{p^{e_i}}$.  Using \autoref{lem.KeyLemma},  we may say $G = \mu_{p^e}$ and the result then follows directly from \autoref{lem.DecompositionCyclicCovers}.
\end{proof}

\begin{theorem}[The existence of a maximal cover] \label{thm.FundamentalGroupIsFinite}
Work in \autoref{setup}. There exists a chain of finite extensions of strongly $F$-regular $\kay$-rational germs
\[
(R, \mmm, \kay,K) \subsetneq (R_1, \mmm_1, \kay,K_1) \subsetneq \cdots \subsetneq (R_t, \mmm_t, \kay, K_t) = (R^{\star}, \mmm^{\star}, \kay, K^{\star}),
\]
where each extension $(R_i, \mmm_i, \kay,K_i) \subsetneq (R_{i+1}, \mmm_{i+1}, \kay, K_{i+1})$ is a $G_i$-quasitorsor and:
\begin{enumerate}
\item $G_i/\kay$ is a linearly reductive finite algebraic group for all $i$, 
\item $[K^{\star}:K]$ is at most $1/s(R)$,
\item $\varrho^1_{X^{\star},U^{\star}}(G) \mathrel{:} \check{H}^1(X_{\mathrm{fl}}^{\star},G) \to \check{H}^1(U^{\star}_{\mathrm{fl}}, G)$ is surjective for all big opens $U^{\star} \subset X^{\star}$ and all finite algebraic groups $G/\kay$ for which $G^{\circ}$ is solvable.
\end{enumerate}
\end{theorem}
\begin{proof}
By iterating \autoref{thm.MainTheorem} until $s(R)$ is exhausted, we obtain the weaker version of this result where in part (c) we assume that $G^{\circ}$ is trigonalizable. To see part (b), just notice that the transformation rule in \autoref{thm.TransformationRule} yields
\begin{multline*}
1 \geq s(R^{\star}) = [K^{\star}:K_{t-1}] s(R_{t-1}) = [K^{\star}:K_{t-1}][K_{t-1}:K_{t-2}]s(R_{t-2}) \\
= \cdots = [K^{\star}:K_{t-1}] [K_{t-1}:K_{t-2}] \cdots [K_1: K] s(R) =[K^{\star}:K] s(R).
\end{multline*}
It remains to show part (c) when we let $G^{\circ}$ to be solvable.  As in the proof of \autoref{thm.MainTheorem},  it suffices to prove that $\varrho^1_{X^{\star},U^{\star}}(G)$ is surjective for all infinitesimal solvable groups $G/\kay$.  We proceed by induction on $o(G)$.  The base case is covered by the commutative case.  We may assume $G$ is not commutative and so that there is a nontrivial proper normal subgroup $G' \subset G$; which is necessarily solvable,  such that $G''=G/G'$ is commutative.  The result follows by applying \autoref{lemmaa.InductiveStep} and the inductive hypothesis.
\end{proof}

\begin{remark} \label{rem.EtaleInfinitesimalDecomposition}
The maximal cover $R \subset R^{\star}$ in \autoref{thm.MainTheorem} can be constructed as $R \subset R^{\mathrm{\Acute{e}t}} \subset R^{\star}$ where $R \subset R^{\mathrm{\Acute{e}t}}$ is the universal Galois quasi-\'etale cover from \cite{CarvajalSchwedeTuckerEtaleFundFsignature}, which is tame of prime-to-$p$ degree, and $R^{\mathrm{\Acute{e}t}} \subset R^{\star}$ is obtained as a finite chain of infinitesimal Veronese-type cyclic covers and so it is purely inseparable of degree $p^e$ for some $e \geq 0$.  \emph{It is unclear to the author whether $R \subset R^{\star}$ can be realized as a $G^{\star}$-quasitorsor for some linearly reductive algebraic group $G^{\star}/\kay$. } It is even unclear to him whether this can be done for $R^{\mathrm{\acute{e}t}} \subset R^{\star}$ (see \cite[Example 2.7]{TomariWatanabeNormalZrGradedRings}).  However,  this is true in dimension $2$ by \cite[Theorem 11.2]{LiedtkeMartinMatsumotoLinearlyreductiveQuotientSingularities}.
\end{remark}

\subsection{Relation with Esnault--Viehweg's 
  local Nori fundamental group-scheme}
It is natural to wonder what is the relationship between our maximal
cover $R \subset R^{\star}$; see
\autoref{thm.FundamentalGroupIsFinite}, and Esnault--Viehweg's
construction of the \emph{local Nori fundamental group-scheme} in
\cite{EsnaultViehwegSurfaceSingularitiesDominatedSmoothVarieties}. For
simplicity, let us consider the commutative case only. Then, the
property our cover has is that its abelian local Nori fundamental
group-scheme
$\pi_{1,\textnormal{loc}}^{\textnormal{N,ab}}(U^{\star},X^{\star},x^{\star})$
is trivial. Nevertheless, it is not clear to the author whether or not
this implies that $\pi_{1,\textnormal{loc}}^{\textnormal{N,ab}}(U,X,x)
$ is finite. He is deeply thankful to Christian Liedtke who pointed
out to him an example suggesting this should not be always the
case. The example is a surface $\textnormal{D}_4$ singularity in
characteristic $2$ that is a $\Z/2\Z$-quotient of
$\hat{\mathbb{A}}_{\kay}^2$ but admits a nontrivial
$\mu_2$-quasitorsor. The singularity is $\kay\llbracket
x,y,x\rrbracket/(z^2-xyz-x^2y-xy^2 )$. The divisor defined by $(x,z)$
has index $2$. Therefore, its local abelian Nori fundamental
group-scheme cannot be $\Z/2\Z$ as it must take into account
$\mu_2$ as well. This singularity  (although $F$-pure) is not
strongly $F$-regular. See \cite[Example
7.12]{SchwedeTuckerTestIdealFiniteMaps} for a closer look into this
singularity, \cf \cite{ArtinWildlyRamifiedZ2Actions}.

\subsection{Beyond the solvable case}
The author is grateful to Axel St\"{a}bler for making him aware of the
classification of simple infinitesimal rank-$1$ algebraic groups via
the classification of simple restricted Lie algebras by
Block--Wilson--Premet--Strade
\cite{BlockWilsonClassification,StradeWilsonClassification,PremetStradeCompletion}.
See \cite{VivianiSimpleFIniteGroupSchemes} for a nice account. With
$\varrho^1_{X,U}(G)$ as above, the following questions are of
interest.

\begin{question}
For which simple infinitesimal rank 1 algebraic groups $G/\kay$ is $\varrho^1_{X,U}(G)$ surjective? Is this the case for those $G/\kay$ of Cartan type?
\end{question}
\begin{question}
If $G/\kay$ is a simple infinitesimal rank 1 algebraic group such
that $\varrho^1_{X,U}(G)$ is not necessarily surjective, do we have an
analog of \autoref{pro.GoodTrace}?
\end{question}
\begin{question}
Given a simple infinitesimal rank 1 algebraic group $G/\kay$, for
which type of ($F$-)singularity $X$, if any, is
$\varrho^1_{X,X\smallsetminus \{\mmm\} }(G)$ naturally surjective? 
\end{question}
\begin{question}
Let $S$ be the spectrum of a strictly local ring and $G/S$ be a (not
necessarily solvable) finite flat group-scheme. Does the vanishing
$\check{H}^2(S_{\textnormal{fl}}, G)=0$ hold? 
\end{question}

As we have seen in the proofs of \autoref{thm.UnipotentTorsors} and
\autoref{thm.FundamentalGroupIsFinite}, we need to answer this last
question in order to bootstrap the simple case to the general case.

\section{Applications to divisor class groups} \label{sec:applications}

The methods in \cite{CarvajalSchwedeTuckerEtaleFundFsignature} were
able to bound the prime-to-$p$ torsion of $\Cl R = \Pic U$ from above
by $1/s(R)$. Using \autoref{pro.GoodTrace}, we are able to bound all
torsion. Recall that 
$\Cl R \subset \Cl R^{\mathrm{h}} \subset \Cl \hat{R}$ for a normal
local domain $R$; see \cite[\S1C]{MumfordAlgebraicGeometryI}.

\begin{corollary} \label{cor.TorsionPicard}
Let $R$ be a local strongly $F$-regular ring. If $D \in \Cl R$ is torsion with index $n$, then $n\leq 1/s(R)$ $($\textit{i.e.} $\Cl R$ is $n$-torsion with $n \leq \lfloor 1/s(R) \rfloor)$. Thus, $\Cl R$ is torsion-free if $s(R)>1/2$.
\end{corollary}
\begin{proof}
Setting $\divisor a^{-1} = nD$, we get $n \cdot s(R)= s(C(D,a;n)) \leq
1$ by \autoref{pro.GoodTrace}, \autoref{rem.NotStrictlyHenselian}. 
\end{proof}

\begin{remark} 
In view of \autoref{cor.TorsionPicard}, one may wonder whether 
$(\Cl R)_{\mathrm{tor}}$ is finite with order at most $1/s(R)$. After
a former version of this preprint was public, this was addressed
effectively by T.~Polstra and I.~Martin
\cite{PolstraATheoremAboutMCMM,MArtinBoundOnClRforSFR}. We would like
to remark next that finiteness of $(\Cl R)_{\mathrm{tor}}$ follows
from our methods as well. However, the author acknowledges that he
only thought of this possibility after seeing Polstra's and Martin's
work. The argument is as follows. Let $R \subset R^{\star}$ be as in
\autoref{thm.MainTheorem}. Note that $\Cl R^{\star}$ is
torsion-free. Moreover, the kernel of the induced homomorphism 
$\Cl R \to \Cl R^{\star}$ is $(\Cl R)_{\mathrm{tor}}$. Indeed, 
$R \subset R^{\star}$ restricts to a finite faithfully flat cover
$U^{\star} \to U$ of degree $[K^{\star}:K]$ (where 
$U^{\star} \subset \Spec R^{\star}$ is a big open). In particular, 
$H \coloneqq \ker(\Pic U \to \Pic U^{\star})$ is 
$[K^{\star} : K]$-torsion; see \cite[Lemma
2.1]{GuralnickJaffeRaskindWiegandOnThePicardGroup}. Conversely, it is
clear that torsion elements of $\Cl R$ must be in $H$ as 
$\Pic U^{\star} = \Cl R^{\star}$ is torsion-free. It remains to
explain why $H$ is finite. We exploit that it is 
$[K^{\star} : K]$-torsion. Let us consider 
$R \subset R^{\mathrm{\Acute{e}t}} \subset R^{\star}$ as in
\autoref{rem.EtaleInfinitesimalDecomposition}, which induces 
$\Pic U \to \Pic U^{\mathrm{\acute{e}t}} \to \Pic U^{\star}$. It
suffices to prove that the kernel of both 
$\Pic U \to \Pic U^{\mathrm{\acute{e}t}}$ and 
$\Pic U^{\mathrm{\acute{e}t}} \to \Pic U^{\star}$ are finite. By an
argument as above, 
$(\Pic U)_{\mathrm{tor}}^{(p)} = \ker (\Pic U \to \Pic
U^{\mathrm{\acute{e}t}})$ 
is the prime-to-$p$ torsion subgroup (this uses that
$U^{\mathrm{\acute{e}t}} \to U$ is tame and 
$(\Pic U^{\mathrm{\acute{e}t}})_{\mathrm{tor}}^{(p)}=0$). However,
there exist a prime-to-$p$ integer $n$ such that 
$(\Pic U)_{\mathrm{tor}}^{(p)}$ is contained in the image of
$H^1(U_{\textnormal{fl}}, \mu_n) \to \Pic U$. Nonetheless,
\[
  H^1(U_{\textnormal{fl}}, \mu_n) \cong
  H^1(U_{\mathrm{\Acute{e}t}}, \Z/n\Z) \cong
  \Hom\big(\pi_1^{\mathrm{\Acute{e}t}}(U), \Z/n\Z\big) 
\]
is finite. To see why $\ker(\Pic U^{\mathrm{\acute{e}t}} \to \Pic
U^{\star})$ is finite, it suffices to consider the case where
$R^{\mathrm{\acute{e}t}} \subset R^{\star}$ is a single cyclic
cover. Then, the kernel is generated by the divisor being linearized;
see \cite[Corollary 2.6]{TomariWatanabeNormalZrGradedRings}, and so it
is finite. 

Our argument has the salient feature of showing that, in order to prove that $\Cl R$ is finitely generated for strongly $F$-regular singularities, one may assume that $\Cl R$ is torsion-free.
\end{remark}

\begin{corollary}[{\cf \cite[Corollary 3.7]{CarvajalSchwedeTuckerEtaleFundFsignature}}] \label{cor.DivisorClassGroup1}
Let $\sA$ be an ample invertible sheaf on a globally $F$-regular projective variety $Y$  and let $A\coloneqq\bigoplus_{i\geq 0} H^0(Y, \sA^i)$ be its section ring. If $\sA = \sL ^n$ for another invertible sheaf $\sL$ then $n\leq  1/s(A_O)$, where $O \in \Spec A\eqqcolon C(Y)$ is the vertex of the cone.
\end{corollary}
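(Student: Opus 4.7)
The strategy is to invoke \autoref{cor.TorsionPicard} at the strict Henselization of $A_O$, with the line bundle $\sL$ providing a torsion class of exact order $n$ in the divisor class group of the vertex.

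First, I would recall the classical identification $\Cl(A_O) \cong \Cl(Y)/\mathbb{Z} \cdot [\sA]$ for the section ring of $(Y, \sA)$, and note that $A_O$ is a strongly $F$-regular, $F$-finite local $\kay$-algebra by the Schwede--Smith theorem on globally $F$-regular varieties. Given $\sA = \sL^n$, I claim the class of $\sL$ has order exactly $n$ in $\Cl(A_O)$. Indeed, $n[\sL] = [\sA] \equiv 0$ in the quotient, so the order $m$ divides $n$. Suppose $m < n$; then $\sL^m \cong \sA^k$ in $\Pic Y$ for some integer $k$, and combining with $\sA = \sL^n$ yields $\sL^{m-nk} \cong \O_Y$. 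If $m \neq nk$, then $\sL$ is torsion in $\Pic Y$, whence $\sA = \sL^n$ is torsion too, contradicting ampleness of $\sA$ on the positive-dimensional $Y$. Therefore $m = nk$, and $0 < m \leq n$ with $k \geq 1$ forces $k = 1$, $m = n$.

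Next, to match the strictly-local setup of \autoref{cor.TorsionPicard}, I would pass to the strict Henselization $(A_O^h, \mathfrak{m}^h, \kay, K^h)$, which is an $F$-finite strongly $F$-regular $\kay$-rational germ (the residue field remains $\kay$ since $\kay$ is algebraically closed) with $s(A_O^h) = s(A_O)$. The canonical map $\Cl(A_O) \to \Cl(A_O^h)$ is injective: given a rank-one reflexive $A_O$-module $M$ whose Henselization is free, Nakayama produces a surjection $A_O \to M$ (since both sides have one-dimensional residue-field fibre), and faithful flatness of $A_O \to A_O^h$ then upgrades this surjection to an isomorphism. Hence the image of $[\sL]$ in $\Cl(A_O^h) \cong \Pic U^h$ still has order exactly $n$, where $U^h$ denotes the punctured spectrum of $A_O^h$.

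Finally, applying \autoref{cor.TorsionPicard} to the germ $A_O^h$ with $I = \mathfrak{m}^h$ (noting $\dim A_O^h = \dim Y + 1 \geq 2$, as the case $\dim Y = 0$ is vacuous) yields $n \leq 1/s(A_O^h) = 1/s(A_O)$, completing the argument. The principal technical subtlety is the injectivity of the class group under Henselization, which ultimately rests on the faithful flatness of $A_O \to A_O^h$.
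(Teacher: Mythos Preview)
Your argument is correct and, at bottom, rests on the same mechanism as the paper's: one produces a Veronese-type cyclic cover of index $n$ over the strict Henselization of $A_O$ and applies the $F$-signature transformation rule. The packaging differs. The paper constructs the cover directly by taking $B=\bigoplus_{i\geq 0}H^0(Y,\sL^i)$, observing that $A$ is the $n$-th Veronese subring of $B$, and then applying \autoref{pro.GoodTrace} to $A_O^{\sh}\subset B_O^{\sh}$. You instead encode $\sL$ as a torsion class in $\Cl(A_O)$, transport it to $\Cl(A_O^{\sh})$ via the injectivity argument, and invoke \autoref{cor.TorsionPicard}. Your route is slightly longer but has the virtue of making explicit the verification that the index of the relevant divisor is exactly $n$---a point the paper's proof asserts (``cyclic of Veronese-type of index $n$'') without spelling out, and which genuinely requires the ampleness-vs-torsion dichotomy you wrote down.

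One small repair: you identify $\Cl(A_O^{\sh})$ with $\Pic U^{\sh}$ for $U^{\sh}$ the \emph{punctured spectrum}, but that identification holds only when the punctured spectrum is regular, which need not be the case here (if $Y$ is singular, so is the cone away from the vertex). Either take $U^{\sh}$ to be the regular locus---then $\Pic U^{\sh}=\Cl(A_O^{\sh})$ on the nose and \autoref{cor.TorsionPicard} applies with $I$ defining the singular locus---or note that the specific divisor coming from $\sL$ is Cartier on the punctured spectrum because $\sL$ is already invertible on $Y$ and the punctured cone maps to $Y$. Either fix is immediate; the rest of your argument stands.
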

\begin{proof}
Recall that $Y$ is globally $F$-regular if and only if $A$ is strongly $F$-regular \cite{SchwedeSmithLogFanoVsGloballyFRegular}. Thus, $s(A_O)>0$. Let $B\coloneqq \bigoplus_{i\geq 0} H^0(Y, \sL^i)$. Then, $A$ is the $n^\mathrm{th}$ Veronese subring of $B$ and so $A_{O}^{\textnormal{sh}} \subset B_{O}^{\textnormal{sh}}$ is a degree $n$ Veronese-type cyclic cover. Then, $1 \geq s(B_{O}^{\textnormal{sh}}) = n \cdot s(A_O^{\textnormal{sh}}) = n \cdot s(A)$.
\end{proof}
\begin{corollary} \label{cor.DivisorClassGroup}
Let $Y$ be a globally $F$-regular projective variety. Then, $\Pic Y$ is torsion-free.
\end{corollary}
\begin{proof}
Let $D$ be a Weil divisor on $Y$ of index $n$ and write $\divisor_Y a + n D
=0$ for some $a \in K(Y)^{\times}$. Let $f\mathrel{:} Y' = \mathbf{Spec}_Y
\bigoplus_{i=0}^{n-1} \sO_Y(iD) \to Y$ be the corresponding cyclic
cover, which is a $\mu_n$-quasitorsor (in fact, a
$\mu_n$-torsor over the Cartier locus of $D$). Let $\sA$, $A$,
and $C(Y)$ be as in \autoref{cor.DivisorClassGroup1}. Recall that $\Cl Y$ is an extension of $\Cl C(Y)$ by $\Z$; see \cite[II, Exercise
6.3]{Hartshorne}. Thus, we may think of $D$ as a divisor on  $C(Y)$ of
index $n$ and the same formula $\divisor_{C(Y)} a + n D =0$ works with
$a \in K(Y)(t)^{\times}$ (where $K(Y)(t)$ is the function field of
$C(Y)$). Furthermore, $C(D,a;n)$ coincides with the section ring of
$Y'$ with respect to $f^* \sA$---both rings are the normalization of
$A$ inside $K(Y)(t,a^{1/n})$.\footnote{This can be verified by direct
  computation too using that $A(D) = \bigoplus_{i=0}^{\infty}
  H^0(Y,\sO_Y(D) \otimes\sA^i)$. One may think of this equality as the
  definition of $D$ as a divisor on $C(Y)$; see
  \cite[\S5.2]{SchwedeSmithLogFanoVsGloballyFRegular}.} Therefore,
$Y'$ is a globally $F$-regular projective variety. Now, it is
well-known that $\chi(Y,\sO_Y) = 1 = \chi(Y',\sO_{Y'})$;  see
\cite[Corollary 4.3]{SmithGloballyFRegular}. However, if $D$ is further Cartier, we have:
\[
  \chi(Y',\sO_{Y'}) = \chi(Y,f_*\sO_{Y'})
  = \chi\left(Y, \bigoplus_{i=0}^{n-1} \sO_Y(iD)\right)
  = \sum_{i=0}^{n-1} \chi\big(Y,\sO_Y(iD)\big)
  = \sum_{i=0}^{n-1}\chi(Y,\sO_Y)
  = n\chi(Y,\sO_Y),
\]
where we used that $\chi\big(Y,\sO_Y(E)\big) = \chi(Y,\sO_Y)$ for all
torsion Cartier divisors $E$ on $Y$, which follows from the
Hirzebruch--Riemann--Roch theorem. Indeed, 
\begin{align*}
  \chi\big(Y,\sO_Y(E)\big)
  &= \int_Y \chern(E) \cdot \td(Y)
    = \int_Y (1+E + \tfrac{1}{2}E^2 + \cdots ) \cdot \td(Y) \\
  &= \int_Y\td(Y) + \int_Y E\cdot ( 1+\tfrac{1}{2}E + \cdots )
    \cdot \td(Y) \\
  &= \chi(Y,\sO_Y) + \int_Y E\cdot ( 1+\tfrac{1}{2}E + \cdots )
    \cdot \td(Y).
\end{align*}
However, if $e \cdot E \sim 0$, then $e$ times the integer $\int_Y
E\cdot ( 1+\tfrac{1}{2}E + \cdots ) \cdot \td (Y) $ is zero.
\end{proof}

\begin{remark}
Globally $F$-regular varieties are of Fano type
\cite{SchwedeSmithLogFanoVsGloballyFRegular}. To the best of the
author's knowledge, the statement of \autoref{cor.DivisorClassGroup}
is open in general for varieties of Fano type ($p > 0$).
\end{remark}

\begin{remark}
Another proof of \autoref{cor.DivisorClassGroup} can be obtained as
follows. By \cite[Corollary 4.3]{SmithGloballyFRegular},
$\chi(Y,\sO_Y(N)) = h^0(Y,\sO_Y(N))$ for all nef Cartier divisors $N$ (\eg
torsion). Then, $0=\chi(Y,\sO_Y(D)) = \chi(Y,\sO_Y) = 1$ in case
$n>1$.  So we have $n=1$.
\end{remark}

The author is thankful to Fabio Bernasconi for showing him the following example and for kindly letting him to include it here. The example shows that we cannot replace $\Pic Y$ by $\Cl Y$ in \autoref{cor.DivisorClassGroup}, \emph{i.e.} there are globally $F$-regular varieties with torsion in their divisor class group. In fact, it shows that there are (necessarily non-factorial) toric varieties in any characteristic with torsion in their divisor class group.

\begin{example}
Set $X = \bP^1 \times \bP^1$ and denote the projection morphisms by $\pi_1, \pi_2  \mathrel{:} X \to \bP^1$. Here, we may define $\bP^1$ over an arbitrary algebraically closed field of any characteristic (including zero). Let us write $\bP^1 = \mathbb{G}_{\textnormal{m}} \cup \{0,\infty\}$ for the standard toric structure on $\bP^1$. For $i=1,2$, let us define $F_i \coloneqq \pi_i^* 0 \subset X$. Let $X_1 \to X$ be the blowup of $X$ at the intersection point $F_1 \cap F_2$ and let $E \subset X_1$ denote the corresponding exceptional curve. By abuse of notation, we shall denote the strict transform of $F_i$ in $X_1$ by $F_i$. Let $X_2 \to X_1$ be the blowup of $X_1$ at the intersection points $F_1 \cap E$ and $F_2 \cap E$, and let $D_1$, $D_2$ denote the corresponding exceptional curves. Observe that $X_1$ and $X_2$ are toric varieties, for we blew up points fixed by the torus action. Note that $E$ is a $(-3)$-curve on $X_2$ whereas $F_1$ and $F_2$ are $(-2)$-curves (once again, we have denoted strict transforms by the same letters). By contracting the curves $F_1$ and $F_2$ on $X_2$ (which are fixed by the torus action), we obtain a toric surface $Y$ with two singular points $y_1,y_2 \in Y$ corresponding respectively to the contractions of $F_1,F_2\subset X_2$ along $X_2 \to Y$. It is worth noting that these are $A_1$ points and so non-factorial. In fact, $\Cl \sO_{Y,y_i} \cong \Z/2$. For $i=1,2$, let $D'_i \subset Y$ be the push-forward of $D_i \subset X_2$ along the contraction $X_2 \to Y$. Observe that $D\coloneqq D_1'-D_2'$ has Cartier index $2$, for it generates the divisor class groups of $\sO_{Y,y_1}$ and $\sO_{Y,y_1 }$. In particular, $D$ is not linearly equivalent to zero. However, by the symmetry in the construction, one sees that $D$ is numerically trivial. Thus, $2D$ is a Cartier divisor numerically equivalent to zero. Since $H^1(Y,\sO_Y)=0$ and $\Pic Y$ is torsion free, numerical equivalence coincides with linear equivalence on Cartier divisors on $Y$ (use \cite[Corollary 9.5.13, Theorem 9.6.3]{KleimanPicardScheme}, \cf \cite[Corollary 1.4.38]{LazarsfeldPositivityI}). Hence, $2D$ is linearly equivalent to zero. In conclusion, $D$ defines a torsion element of order $2$ in $\Cl Y$. One also sees that $\chi(Y,\sO_Y(D))=h^0(Y,\sO_Y(D))=0 \neq 1 = \chi(Y,\sO_Y)$ and so why the proof of \autoref{cor.DivisorClassGroup} is not valid for non-Cartier Weil divisors.
\end{example}

\begin{remark}
As we just saw above, the divisor class group of a globally $F$-regular variety $Y$ may have torsion. However, by using \autoref{cor.TorsionPicard} and the cone construction, one readily sees that the torsion of $\Cl Y$ is bounded by $1/s(A_O)$ (with notation as in \autoref{cor.DivisorClassGroup1}). Indeed, since $\Cl Y$ is an extension of $\Cl C(Y)$ by $\Z$, the order of a torsion element of $\Cl Y$ is the same as the order of its image in the quotient $\Cl C(Y)$. 
\end{remark}


\end{document}